\newcommand{\jbhalf}{j\!-\!\frac12}
\newcommand{\jfhalf}{j\!+\!\frac12}
\newcommand{\dt} {\tau}
\newcommand{\Lcal}{\mathcal{L}}
\newcommand{\Kcal}{\mathcal{K}}
\newcommand{\Tcal} {\mathcal{T}}
\newcommand{\Ocal} {\mathcal{O}}
\newcommand{\Ht}{H^{1}(\Tcal_h)}
\newcommand{\nl}{{n,\ell}}
\newcommand{\norm}   [1] {\Vert#1\Vert}
\newcommand{\jump}   [1] {[\![#1]\!]}
\newcommand{\be}{\begin{equation}}
\newcommand{\ee}{\end{equation}}
\newcommand{\bes}{\begin{subequations}}
\newcommand{\ees}{\end{subequations}}
\newtheorem  {theorem}      {\noindent Theorem}[section]
\newtheorem  {lemma}        {\noindent Lemma}[section]
\theoremstyle{remark}
\newtheorem  {Remark}        {\noindent Remark}[section]
\title{Local discontinuous Galerkin methods with explicit-implicit-null
time discretizations for solving nonlinear diffusion problems}
\author{
Haijin Wang\footnotemark[2]
\and
Qiang Zhang
\footnotemark[3]
\and
Shiping Wang\footnotemark[4]
\and
Chi-Wang Shu\footnotemark[5]
}
\begin{document}
\maketitle
\renewcommand{\thefootnote}{\fnsymbol{footnote}}

\footnotetext[2]{School of Science, Nanjing University of
Posts and Telecommunications,
Nanjing 210023, Jiangsu Province, P.~R.~China.
E-mail: hjwang@njupt.edu.cn.
Research sponsored by NSFC grants 11601241 and 11671199,
Natural Science Foundation of Jiangsu Province grant BK20160877.}
\footnotetext[3]{Department of Mathematics, Nanjing University,
Nanjing 210093, Jiangsu Province, P.~R.~China.
E-mail: qzh@nju.edu.cn.
Research supported by NSFC grants  11671199 and 11571290.}
\footnotetext[4]{College of Shipbuilding Engineering,
Harbin Engineering University, Harbin 150001, P. R. China.
E-mail: wangshiping@hrbeu.edu.cn. Research supported by NSFC grant 11672082.}
\footnotetext[5]{Division of Applied Mathematics, Brown University,
Providence, RI 02912, U.S.A.  E-mail: shu@dam.brown.edu.
Research supported by ARO grant W911NF-15-1-0226 and NSF grant DMS-1719410.}

\begin{abstract}
In this paper we discuss
the local discontinuous Galerkin methods
coupled with two specific explicit-implicit-null
time discretizations for solving one-dimensional nonlinear diffusion
problems $U_t=(a(U)U_x)_x$.
The basic idea is to add and subtract two equal terms $a_0 U_{xx}$
on the right hand side of the partial differential equation,
then to treat the term $a_0 U_{xx}$ implicitly and the other terms
$(a(U)U_x)_x-a_0 U_{xx}$ explicitly.
We give stability analysis for the method on a simplified model
by the aid of energy analysis, which gives a guidance for the
choice of $a_0$,
i.e, $a_0 \ge \max\{a(u)\}/2$
to ensure the
unconditional stability of the first order and second order schemes.
The optimal error estimate is also derived for the simplified
model, and
numerical experiments are given to demonstrate the stability,
accuracy and performance of the schemes for nonlinear diffusion
equations.
\end{abstract}

\textbf{Keywords}.
local discontinuous Galerkin, explicit-implicit-null time discretization,
nonlinear diffusion, stability, error estimates.

\textbf{AMS classification}.
65M12, 65M15, 65M60

\section{Introduction}
\label{sec1}

Many partial differential equations (PDE) which arise
in physics or engineering involve the computation of nonlinear diffusion,
such as the miscible displacement in porous media \cite{Ewing} which is widely used in the
exploration of underground water, oil, and gas,
the carburizing model \cite{Carburizing} which is derived in the
chemical heat treatment in mechanical industry,
the high-field model in semiconductor device simulations \cite{HF,HF1},
and so on. It is well known that the time discretization is a very important issue
for such problems containing complicated nonlinear diffusion coefficients.
Explicit time marching always suffer from stringent time step
restriction.
Implicit time marching can overcome the constraint of small time step,
however, this method becomes cumbersome if the diffusion coefficients
vary in space or depend on the solution (quasi-linear or nonlinear cases),
since a Newton iteration is required at each time step.

To cope with both the shortcomings of the explicit and implicit time marching methods,
we notice that the implicit time discretization can be actually
very efficient
for solving diffusion equations with constant coefficients, since the inverse
matrix is only needed to be solved once. This observation inspire us to add and subtract a term with
constant diffusion coefficient $a_0 U_{xx}$ on the right hand side of the considered PDE
\be \label{eq:diff}
U_t=(a(U)U_x)_x,  \qquad x \in \Omega=[a,b], \, t\in (0,T]
\ee
where $a(U)\ge 0$ and $a(U)$ is bounded and smooth,
and then apply the implicit-explicit (IMEX) time marching methods \cite{Ascher}
to the equivalent PDE
\be \label{eq:diff:2}
U_t=\underbrace{(a(U)U_x)_x-a_0 U_{xx}}_{T_1} + \underbrace{a_0 U_{xx}}_{T_2}.
\ee
Namely, we treat the damping term $T_2$ implicitly and the remaining term $T_1$ explicitly.

Such idea had been adopted by Douglas and Dupont \cite{Douglas}
 to assure the stability for a nonlinear diffusion equation on
a rectangle. The similar idea has also been adopted,
for example, by Smereka \cite{Smereka} in the context of flow by mean curvature and surface diffusion, by Jin
and Filbet \cite{Filbet} in the context of the Boltzmann equation of rarefied gas dynamics when the Knudsen
number is very small,  in the context of hyperbolic systems with diffusive relaxation \cite{Boscarino},
and for the solution of PDEs on surfaces \cite{Macdonald}.
In a recent study, Duchemin and Eggers \cite{Duchemin} proposed to call this method as
explicit-implicit-null (EIN) method.

In this paper, we exploit  EIN method coupled with local discontinuous Galerkin (LDG)
spatial discretization to solve the nonlinear diffusion equation (\ref{eq:diff}).
The LDG method was introduced by Cockburn and Shu in  \cite{Cock:Shu:LDG}
for solving convection diffusion equations, motivated by the work
of Bassi and Rebay  \cite{Bassi} for the compressible Navier-Stokes equations.
The idea of the LDG method is to rewrite the equations with higher order derivatives
into an equivalent first order system, then apply the DG method \cite{Cock:Shu:DG} to the system,
so the LDG scheme shares the advantages of the DG methods.
It can easily handle meshes with hanging nodes, elements of general shapes and local
spaces of different types, thus it is flexible for $hp$-adaptivity.
Besides, a key advantage of the LDG scheme is the local solvability,
that is, the auxiliary variables approximating the derivatives of
the solution can be locally eliminated \cite{Cock:Shu:LDG,Castillo}.

Two EIN time marching schemes with LDG spatial discretization (EIN-LDG)
will be analyzed in the present paper.
The first order scheme is a combination of forward Euler discretization
and backward Euler discretization for the explicit part and
the implicit part, respectively, which was considered in our previous work
\cite{WSZ:1,WSZ:2} for solving one-dimensional convection-diffusion problem
and time-dependent fourth order problem.
The second order scheme to be considered in this paper is different from
the one we used in \cite{WSZ:1,WSZ:2}, the new scheme is a modification of
the second order scheme proposed by Cooper and Sayfy \cite{Cooper}.
By the aid of the energy analysis, we show that the proposed schemes
are unconditionally stable provided $a_0 \ge a/2$
for the simplified linear
model $U_t=a U_{xx}$, where $a>0$ is a constant.
The optimal error estimates will also be given by energy analysis
for the simplified model.
{We would like to point out that it is necessary to do  energy analysis
even for the linear model, since the spatial discretization may result in
non-normal systems with a growing dimension, hence
the spectral stability analysis based on scalar eigenvalues arguments
may be misleading \cite{Levy}.}

Based on the stability and error analysis for the simplified model, we
propose a guidance for the choice of $a_0$ for the
general model $U_t=(a(U)U_x)_x$,
that is, $a_0  \ge \max\{a(u)\}/2$, where $u$ is the numerical solution.
It is worth pointing out that it is not necessary to scan the maximum of
$a(u)$ and adjust $a_0$ at every time level, theoretically we can choose $a_0$ as a
sufficiently large positive constant. However, too large $a_0$ may
cause larger errors and may require a smaller time step from our numerical observation.
So in practical computing, we adjust $a_0$ after certain number of time steps
to alleviate numerical errors and to keep high efficiency in the meantime.
We point out that the EIN-LDG schemes
also work well for convection-diffusion problems with nonlinear diffusions.
To verify the accuracy and performance of the proposed schemes, we
present several numerical experiments, including the simulations
for porous media equations and the high-field model in semiconductor
device simulations.

The  paper is organized as follows.
In Section \ref{sec2} we present the semi-discrete LDG scheme and
the  time-discretization methods.
Sections \ref{sec3} and \ref{sec4} are devoted to the stability and error
analysis of the EIN-LDG methods, respectively.
In Section \ref{sec5} we will present numerical results to verify
the accuracy and the performance of the proposed schemes.
The conclusion is given in Section \ref{sec6}.

\section{The LDG scheme and time-discretization}
\label{sec2}
In this section, we will present the discontinuous finite element space,
the semi-discrete LDG scheme, and the  implicit-explicit (IMEX) Runge-Kutta (RK)
time-discretization methods.
\setcounter{equation}{0}
\subsection{The discontinuous finite element space}
Let $\Tcal_h=\{I_j=(x_{\jbhalf},x_{\jfhalf})\}_{j=1}^N$ be a partition of
$\Omega$, where $x_{\frac12}=a$ and $x_{N\!+\!\frac12}=b$ are
the two boundary endpoints.
Denote the cell length as $h_j=x_{\jfhalf}-x_{\jbhalf}$ for
$j=1,\ldots ,N$, and define $h=\max_j h_j$.
We assume $\Tcal_h$ is quasi-uniform in this paper, that is,
there exists a positive
constant $\rho$ such that for all $j$ there holds $h_j/h \ge \rho$, as $h$
goes to zero.

Associated with this mesh, we define the discontinuous finite element space
\begin{equation}
V_h = \big\{\,v\in L^2(\Omega):v|_{I_j}
\in \mathcal{P}_k(I_j),\,\forall j=1,\ldots,N\,\big\}\, ,
\end{equation}
where $\mathcal{P}_k(I_j)$ denotes the space of polynomials in
$I_j$ of degree at most $k$.
Note that functions in this space are allowed to have
discontinuities across element interfaces.
At each element interface point, for any piecewise function $p$,
there are two traces along
the right-hand and left-hand, denoted by $p^{+}$ and $p^{-}$, respectively.
 The jump is denoted by $\jump{p} = p^{+} - p^{-}$.

\subsection{The semi-discrete LDG scheme}

We begin with equation (\ref{eq:diff:2}) to define the LDG scheme.
Denote by $b(U)=\sqrt{a(U)}$, by introducing $P=b(U)U_x$ and $Q=U_x$,
the equation can be written as
\begin{subequations}
\begin{align}
& U_t +(a_0 Q-b(U)P)_x = a_0 Q_x,\\
& P - B(U)_x = 0, \\
& Q -  U_x = 0,
\end{align}
\end{subequations}
where $B(U)=\int^U b(s)\mathrm{d}s$.
The semi-discrete LDG scheme is to find $u,q,p\in V_h$,
such that for arbitrary $v,r,w\in V_h$ we have
\begin{subequations} \label{var:nonlinear}
\begin{align}
(u_t,v)=&\, \tilde{\Lcal}(b(u)p,v) - a_0 \Lcal(q,v) + a_0 \Lcal(q,v),
\label{var:nonlinear:1} \\
(q,r) = &\, \Kcal(u,r), \\
(p,w) = &\, \tilde{\Kcal}(B(u),w),
\end{align}
\end{subequations}
where
\begin{subequations}
\begin{align}
\Lcal(q,v) = &\,-\sum_{j=1}^N \left[(q,v_x)_j -\hat{q}_{\jfhalf}
v_{\jfhalf}^- + \hat{q}_{\jbhalf} v_{\jbhalf}^+\right],
\\
\Kcal(u,r) =&\,-\sum_{j=1}^N\left[(u,r_x)_j -\hat{u}_{\jfhalf}
r_{\jfhalf}^- + \hat{u}_{\jbhalf} r_{\jbhalf}^+\right],
\\
\tilde{\Lcal}(b(u)p,v) = &\,-\sum_{j=1}^N\left[(b(u)p,v_x)_j -(\widehat{b(u)}\hat{p})_{\jfhalf}
v_{\jfhalf}^- + (\widehat{b(u)}\hat{p})_{\jbhalf} v_{\jbhalf}^+\right],
\\
\tilde{\Kcal}(B(u),w) =&\,-\sum_{j=1}^N\left[(B(u),w_x)_j -\widehat{B(u)}_{\jfhalf}
w_{\jfhalf}^- + \widehat{B(u)}_{\jbhalf} w_{\jbhalf}^+\right].
\end{align}
\end{subequations}
The ``hat'' terms are numerical fluxes which are taken as in \cite{Cock:Shu:LDG,Yan,Xu}
\[ \hat{q} = q^+, \quad
 \hat{p} = p^{+}, \quad
 \hat{u} = u^{-}, \quad
\widehat{B(u)} = B(u^-)\]
and
\[\widehat{b(u)} =\left\{ \begin{array}{cc}\jump{B(u)}/\jump{u} & \mbox{if} \quad \jump{u} \neq 0\\
                                 b((u^++u^-)/2)  & \mbox{otherwise}
               \end{array} \right. ,\]
{where we omitted the subscripts $\jbhalf$ and $\jfhalf$.
For simplicity of analysis, we
consider the periodic boundary conditions, i.e, $w_{\frac12}^-=w_{N+\frac12}^-$
and $w_{N+\frac12}^+=w_{\frac12}^+$ for $w=u,p,q$. For other boundary conditions,
such as Dirichlet boundary condition problems, we refer the readers to \cite{Castillo,WSZ:4}
for the setting of numerical fluxes.}

The initial solution $u^0$ can be taken as any approximation of the initial condition $U(x,0)$,
for example the Gauss-Radau projection of $U(x,0)$.

We have the following lemma which can be obtained easily by integrating by parts,
so we omit the  proof and refer the reader to \cite{Zhang:Gao}.
\begin{lemma}
For any pairs of $(u_1,q_1)$ and $(u_2,q_2)$ belonging to $V_h \times V_h$, we have
\begin{equation} \label{property}
\Lcal(q_1,u_2) = -\Kcal(u_2,q_1) = -(q_2,q_1),
\end{equation}
and for any pairs of $(u_1,p_1)$ and $(u_2,p_2)$ belonging to $V_h \times V_h$, we have
\begin{equation} \label{property:1}
\tilde{\Lcal}(b(u_1)p_2,u_1) = -\tilde{\Kcal}(B(u_1),p_2) = -(p_1,p_2).
\end{equation}
\end{lemma}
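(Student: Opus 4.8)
The plan is to split each chain of equalities in (\ref{property}) and (\ref{property:1}) into two independent steps: an integration-by-parts identity for the bilinear forms (which does not use the LDG auxiliary relations), followed by a one-line substitution that does.

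For (\ref{property}) I would first show $\Lcal(q_1,u_2) = -\Kcal(u_2,q_1)$ by integrating by parts cell by cell. The volume parts of the two forms combine through $(q_1,(u_2)_x)_j + (u_2,(q_1)_x)_j = \int_{I_j}(q_1u_2)_x\,\dx$, so after summing over $j$ — using the periodic boundary conditions to reindex the left-endpoint contributions as right-endpoint ones — they produce a sum of interface values of $q_1u_2$. The interface parts, with the fluxes $\hat q_1 = q_1^+$ and $\hat u_2 = u_2^-$, telescope in the same way into sums of $q_1^+\jump{u_2}$ and $u_2^-\jump{q_1}$ over the interfaces. Adding all three contributions and invoking the elementary product-jump identity $\jump{ab} = a^+\jump{b} + b^-\jump{a}$ makes everything cancel at each interface. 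The remaining equality $-\Kcal(u_2,q_1) = -(q_2,q_1)$ is then immediate: since $q_2\in V_h$ is the discrete derivative of $u_2$, the defining relation $(q_2,r) = \Kcal(u_2,r)$ with $r = q_1\in V_h$ gives $\Kcal(u_2,q_1) = (q_2,q_1)$.

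For (\ref{property:1}) the argument has the same skeleton, with two extra ingredients. First, the chain rule $b(u_1)(u_1)_x = (B(u_1))_x$ inside each cell turns $(b(u_1)p_2,(u_1)_x)_j$ into $(p_2,(B(u_1))_x)_j$, which is exactly the volume term that pairs with $\tilde{\Kcal}(B(u_1),p_2)$ after one integration by parts; here $B(u_1)$ is not a polynomial, but it is smooth on each $I_j$, so the cellwise computation is legitimate. Second, for the interface terms I would use that the ``mean-value'' flux is designed so that $\widehat{b(u_1)}\jump{u_1} = \jump{B(u_1)}$ — this also holds in the degenerate case $\jump{u_1} = 0$, since then $\jump{B(u_1)} = 0$ as well — together with $\widehat{B(u_1)} = B(u_1^-)$, which is fluxed from the same one-sided trace that appears in $\jump{p_2B(u_1)} = p_2^+\jump{B(u_1)} + B(u_1^-)\jump{p_2}$. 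Collecting the interface terms exactly as in the linear case then gives $\tilde{\Lcal}(b(u_1)p_2,u_1) + \tilde{\Kcal}(B(u_1),p_2) = 0$, and the final equality follows from $(p_1,w) = \tilde{\Kcal}(B(u_1),w)$ evaluated at $w = p_2\in V_h$.

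The calculations are routine once the summation bookkeeping is set up; the one genuinely delicate point — and the reason the nonlinear identity is not a trivial corollary of the linear one — is matching the interface terms in (\ref{property:1}): one must recognize that the flux $\widehat{b(u)} = \jump{B(u)}/\jump{u}$ is precisely the choice making $\widehat{b(u_1)}\jump{u_1}$ coincide with $\jump{B(u_1)}$, and that $B$ is fluxed from the side ($u^-$) consistent with the product-jump identity. With any other choice the two forms would fail to be mutually adjoint and the cancellation would break down.
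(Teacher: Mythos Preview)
Your proposal is correct and follows exactly the route the paper indicates: the paper omits the proof entirely, remarking only that the identities ``can be obtained easily by integrating by parts'' and citing \cite{Zhang:Gao}. Your cell-by-cell integration by parts, reassembly at the interfaces via the product-jump identity $\jump{ab}=a^+\jump{b}+b^-\jump{a}$, and the observation that the flux $\widehat{b(u)}$ is chosen precisely so that $\widehat{b(u_1)}\jump{u_1}=\jump{B(u_1)}$ are exactly the expected details behind that remark; the second equality in each chain is, as you say, just the defining relation $(q_2,r)=\Kcal(u_2,r)$ (respectively $(p_1,w)=\tilde{\Kcal}(B(u_1),w)$) evaluated at the appropriate test function.
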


We will discretize the operator $\tilde{\Lcal}(b(u)p,v) - a_0 \Lcal(q,v)$ in
(\ref{var:nonlinear:1}) explicitly and the other operator $a_0 \Lcal(q,v)$ implicitly.
The fully discrete scheme will be referred to as EIN-LDG scheme in this paper.
In the next subsection we will give a brief introduction of the IMEX RK time discretizations.

\subsection{The  IMEX RK time discretizations}

For a detailed introduction to  IMEX RK schemes, we refer the readers
to \cite{Ascher} and \cite{Cooper}.
To give a brief introduction of the scheme,
let us consider the system of ordinary differential equations
\begin{equation} \label{ode}
\frac{\mathrm{d} \mathbf{y}}{\mathrm{d}t} = L (t, \mathbf{y})
+ N(t, \mathbf{y}), \qquad \mathbf{y}(t_0) = \mathbf{y}_0,
\end{equation}
where $\mathbf{y}=[y_1,y_2,\cdots,y_d]^{\top}$,
$L(t, \mathbf{y})$ and $N(t,\mathbf{y})$ are
derived from the spatial discretization of the two parts of the right hand side of PDEs.
By applying the general $s$-stage IMEX RK time marching scheme,
the solution of (\ref{ode}) advanced from time $t^n$ to
$t^{n+1}=t_n + \dt$ is given by:
\begin{align*}
\label{IMEX0}
& \mathbf{Y}_1 = \mathbf{y}_n,  \nonumber \\
&\mathbf{Y}_i = \mathbf{y}_n +
\dt \sum_{j=1}^i a_{ij} L (t^j_n, \mathbf{Y}_j)
+\dt \sum_{j=1}^{i-1} \hat{a}_{ij} N(t^j_n, \mathbf{Y}_j),
\quad 2\le i \le s+1,
\nonumber \\
& \mathbf{y}_{n+1} = \mathbf{y}_n +
\dt  \sum_{i=1}^{s+1} b_i L (t^i_n, \mathbf{Y}_i)
+\dt \sum_{i=1}^{s+1} \hat{b}_{i} N(t^i_n, \mathbf{Y}_i),
\end{align*}
where $\dt$ is the time step, $\mathbf{Y}_i$ denotes the intermediate stages,
$c_i=\sum_{j=1}^i a_{ij}=\sum_{j=1}^{i-1}\hat{a}_{ij}$,
and $t^j_n = t_n+c_j \dt$.
Denote $A=(a_{ij}), \hat{A}=(\hat{a}_{ij}) \in
\mathbb{R}^{(s+1)\times(s+1)}$,
$\bm{b}^{\top}=[b_1,\cdots,b_{s+1}],
\hat{\bm{b}}^{\top}=[\hat{b}_1,\cdots,\hat{b}_{s+1}]$ and
$\bm{c}^{\top}=[0,c_2,\cdots,c_{s+1}]$,
then we can express the
general $s$-stage IMEX RK scheme as the following Butcher tableau
\begin{equation}\label{IMEX}
\begin{array}{c|c|c}
\bm{c} & A & \hat{A} \\
\hline
       & \bm{b}^{\top} & \hat{\bm{b}}^{\top}
\end{array}
\end{equation}

In the above tableau, the pair $(A \,|\, \bm{b})$ determines an $s$-stage
diagonally implicit RK method and $(\hat{A}\,|\, \hat{\bm{b}})$ defines an
$(s+1)$-stage ($s$-stage if $\hat{b}_{s+1}=0$) explicit RK method.
The first order IMEX RK method is taking the forward Euler
discretization for the explicit part and the backward Euler discretization
for the implicit part, which is expressed in the Butcher tableau
\begin{equation} \label{imex:1}
\begin{array}{c|cc|cc}
0 & 0 & 0 & 0 & 0 \\
1 & 0 & 1 & 1 & 0 \\
\hline
  & 0 & 1 & 1 & 0
\end{array}
\end{equation}
The second order IMEX RK method presented in this paper is
\begin{equation}  \label{imex:2}
{ \renewcommand\arraystretch{1.2}
\begin{array}{c|ccc|ccc}
0 & 0 & 0 & 0 &0  & 0 & 0\\
\tfrac12 & 0 & \tfrac12 & 0 & \tfrac12 & 0 & 0 \\
1 & \tfrac12  & 0 & \tfrac12 & 0 &1 & 0 \\
\hline
  & \tfrac12  & 0 & \tfrac12 & 0 &1 & 0
\end{array} }
\end{equation}
which is a modification of the second order scheme
\begin{equation}\label{cooper}
{\renewcommand\arraystretch{1.2}
\begin{array}{c|ccc|ccc}
0 & 0 & 0 & 0 &0  & 0 & 0\\
\tfrac{\mu}{2} & \tfrac{\mu}{2} & 0 & 0 & \tfrac{\mu}{2} & 0 & 0 \\
1 & \tfrac12  & 0 & \tfrac12 & \tfrac{\mu-1}{\mu} &\tfrac{1}{\mu} & 0 \\
\hline
  & \tfrac12  & 0 & \tfrac12 & \tfrac{\mu-1}{\mu} &\tfrac{1}{\mu} & 0
\end{array} }
\end{equation}
given by \cite{Cooper}, where $\mu \neq 0$.
Notice that if we let $\mu=1$, then (\ref{imex:2}) and (\ref{cooper})
are only different in the discretization of $L (t, \mathbf{y})$
at the first intermediate stage, scheme (\ref{cooper})
discretizes $L (t, \mathbf{y})$ explicitly at the first stage,
while the modified scheme (\ref{imex:2}) discretize
$L (t, \mathbf{y})$ implicitly at the first stage.
{Owing to the implicit discretization
at the first stage, the stability of the modified scheme (\ref{imex:2})
is better than the original one (\ref{cooper}), especially when
adopting it for the
convection-diffusion problems.  This is why we consider the modified
scheme (\ref{imex:2}) in this paper.}

\section{Stability analysis}
\label{sec3}
\setcounter{equation}{0}
In this section, we will present the stability analysis
for the proposed EIN-LDG schemes.
We would like to investigate how to choose $a_0$ such that the schemes are stable.
For simplicity of analysis, we consider the simplified equation
\begin{equation}
U_t = a U_{xx},
\end{equation}
with constant diffusion coefficient $a>0$.
Adding and subtracting a term $a_0 U_{xx}$ we get
\begin{equation} \label{eq:linear}
U_t  =(a-a_0)U_{xx} + a_0 U_{xx}.
\end{equation}
Then the LDG scheme reads
\bes \label{LDG:simplify}
\begin{align}
(u_t,v) =&\,(a-a_0) \Lcal(q,v)+ a_0 \Lcal(q,v), \\
(q,r) =&\, \Kcal(u,r),
\end{align}
\ees
where $\Lcal$ and $\Kcal$ has been defined in Section \ref{sec2}.

\subsection{First order scheme}

Now we consider the first order EIN-LDG scheme, which is the
first order IMEX time discretization (\ref{imex:1})
coupled with (\ref{LDG:simplify}), i.e,
\begin{subequations}\label{first}
\begin{align}
&(u^{n+1},v)=(u^n,v)+(a-a_0) \dt \Lcal(q^n,v)+a_0 \dt  \Lcal(q^{n+1},v), \label{euler:1}\\
& (q^{\nl},r) = \Kcal(u^{\nl},r), \quad \mbox{for} \quad \ell=0,1,
\end{align}
\end{subequations}
where $w^{n,0}=w^n$ and $w^{n,1}=w^{n+1}$ for $w=u,q$.

{For the simplified linear model, if we let $a_0=a$ then
the scheme (\ref{first}) degenerates to  backward Euler scheme,
which is unconditionally stable in the sense that
\begin{equation} \label{L2}
\norm{u^n} \le \norm{u^0}, \quad \forall n.
\end{equation}
So we only consider the case $a_0 \neq a$.}
We state the stability result in the following theorem.

\begin{theorem}
If $a_0 \ge \frac{a}{2}$ and $a_0 \neq a$, then the first order EIN-LDG scheme (\ref{first})
is unconditionally stable in the sense that
\begin{equation} \label{main:1}
\norm{u^{n}}^2+ a_0\dt \norm{q^{n}}^2  \le \norm{u^0}^2 + a_0 \dt \norm{q^0}^2.
\end{equation}
\end{theorem}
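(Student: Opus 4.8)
The plan is to carry out a standard energy estimate by choosing the test functions cleverly in the fully discrete scheme (\ref{first}). Taking $v = u^{n+1}$ in (\ref{euler:1}) gives on the left side $(u^{n+1},u^{n+1}) - (u^n,u^{n+1})$, which I would rewrite using the elementary identity $(u^n,u^{n+1}) = \frac12\norm{u^n}^2 + \frac12\norm{u^{n+1}}^2 - \frac12\norm{u^{n+1}-u^n}^2$, so that the left side becomes $\frac12\norm{u^{n+1}}^2 - \frac12\norm{u^n}^2 + \frac12\norm{u^{n+1}-u^n}^2$. On the right side I need to handle the two terms $(a-a_0)\dt\,\Lcal(q^n,u^{n+1})$ and $a_0\dt\,\Lcal(q^{n+1},u^{n+1})$. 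By the key identity (\ref{property}) from the Lemma, $\Lcal(q^{n+1},u^{n+1}) = -(q^{n+1},q^{n+1}) = -\norm{q^{n+1}}^2$, which gives the clean dissipative term $-a_0\dt\norm{q^{n+1}}^2$. For the explicit term, again by (\ref{property}) applied with the pairs $(u^n,q^n)$ and $(u^{n+1},q^{n+1})$ we get $\Lcal(q^n,u^{n+1}) = -(q^{n+1},q^n)$; here I would use Young's inequality $(q^{n+1},q^n) \le \frac12\norm{q^{n+1}}^2 + \frac12\norm{q^n}^2$, being careful about the sign of $a-a_0$.

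Collecting terms, the energy inequality should take the shape
\begin{equation*}
\frac12\norm{u^{n+1}}^2 - \frac12\norm{u^n}^2 + \frac12\norm{u^{n+1}-u^n}^2
\le -a_0\dt\norm{q^{n+1}}^2 - (a-a_0)\dt\,(q^{n+1},q^n),
\end{equation*}
and the task is to bound the right side by $\frac{a_0\dt}{2}\norm{q^n}^2 - \frac{a_0\dt}{2}\norm{q^{n+1}}^2$ so that, after telescoping, (\ref{main:1}) follows with the stated constants. Writing $a - a_0 = -(a_0 - a)$ and using Young's inequality on $(q^{n+1},q^n)$, I would need the coefficient inequality that controls $-a_0\norm{q^{n+1}}^2 + |a-a_0|(\frac12\norm{q^{n+1}}^2 + \frac12\norm{q^n}^2)$ by $\frac{a_0}{2}(\norm{q^n}^2 - \norm{q^{n+1}}^2)$; this reduces to $|a - a_0| \le a_0$, i.e. $-a_0 \le a - a_0 \le a_0$, i.e. $0 \le a \le 2a_0$, which is exactly the hypothesis $a_0 \ge a/2$ (the case $a_0 = a$ being excluded only because it is already handled separately as backward Euler, and the strict inequality $a_0 \neq a$ is not actually needed for the estimate). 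One should double-check whether Young's inequality should be applied with a weight other than $1$ to get the sharpest constant, but the symmetric split looks like it already yields precisely the threshold $a/2$ quoted in the abstract, so I expect no weight tuning is necessary.

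The main obstacle I anticipate is purely bookkeeping: keeping track of the sign of $a - a_0$ (it is negative in the regime of interest, since typically $a_0 > a$, but the statement only assumes $a_0 \ge a/2$, so $a - a_0$ could in principle be positive when $a/2 \le a_0 < a$), and making sure the extra nonnegative term $\frac12\norm{u^{n+1}-u^n}^2$ coming from the backward-Euler-type left side is simply discarded rather than needed. I would also want to verify that the auxiliary equation $(q^{\nl},r) = \Kcal(u^{\nl},r)$ is genuinely used: it enters through (\ref{property}), which presupposes $q$ is the LDG derivative of $u$, so the substitution $\Lcal(q^n, u^{n+1}) = -(q^{n+1}, q^n)$ relies on both $q^n$ and $q^{n+1}$ being defined via $\Kcal$. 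Once these sign conventions are pinned down, the telescoping sum over $n$ is immediate and gives (\ref{main:1}).
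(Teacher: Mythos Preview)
Your proposal is correct and follows essentially the same energy argument as the paper: take $v=u^{n+1}$, use the duality identity (\ref{property}) to convert the $\Lcal$-terms into $(q^n,q^{n+1})$ and $\norm{q^{n+1}}^2$, and then apply Cauchy--Schwarz plus Young's inequality to close the telescoping estimate. The only cosmetic difference is that the paper uses a weighted Young split $|a_0-a|\,\norm{q^n}\norm{q^{n+1}}\le \tfrac{a_0}{2}\norm{q^{n+1}}^2+\tfrac{(a_0-a)^2}{2a_0}\norm{q^n}^2$ and then imposes $(a_0-a)^2\le a_0^2$, whereas you use the symmetric split and impose $|a-a_0|\le a_0$; these are of course the same condition $a_0\ge a/2$, and your observation that no weight tuning is needed is correct.
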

\begin{proof}
Taking $v=u^{n+1}$ in (\ref{euler:1}), and by the property (\ref{property}) we have
\begin{equation} \label{linear:energy:1}
\frac12 \norm{u^{n+1}}^2 + \frac12 \norm{u^{n+1}-u^n}^2 -\frac12 \norm{u^n}^2
 =-(a-a_0)\dt (q^n,q^{n+1}) -a_0 \dt \norm{q^{n+1}}^2.
\end{equation}
Rearranging the terms yields
\begin{equation*}
LHS=\frac12 \norm{u^{n+1}}^2 + \frac12 \norm{u^{n+1}-u^n}^2 -\frac12 \norm{u^n}^2
+a_0 \dt \norm{q^{n+1}}^2=(a_0-a)\dt (q^n,q^{n+1})=RHS.
\end{equation*}
By  simple use of the Cauchy-Schwarz and the Young's inequalities we get
\begin{equation*}
RHS \le |a_0-a| \dt \norm{q^n} \norm{q^{n+1}} \le \frac{a_0}{2} \dt \norm{q^{n+1}}^2
+ \frac{(a_0-a)^2}{2 a_0} \dt \norm{q^{n}}^2.
\end{equation*}
Hence, if we let $\frac{(a_0-a)^2}{2 a_0} \le \frac{a_0}{2}$, i.e, $a_0 \ge \frac{a}{2}$,
then
\begin{equation*}
LHS \le \frac{a_0}{2}\dt (\norm{q^n}^2+\norm{q^{n+1}}^2).
\end{equation*}
As a result, we have
\begin{equation*}
\frac12 \norm{u^{n+1}}^2 + \frac12 \norm{u^{n+1}-u^n}^2 -\frac12 \norm{u^n}^2
+\frac{a_0}{2} \dt (\norm{q^{n+1}}^2 -\norm{q^n}^2 ) \le 0,
\end{equation*}
that is
\begin{equation*}
\norm{u^{n+1}}^2+ a_0\dt \norm{q^{n+1}}^2  \le \norm{u^n}^2 + a_0 \dt \norm{q^n}^2.
\end{equation*}
And hence we are led to (\ref{main:1}).
\end{proof}

\subsection{Second order scheme}
\label{sec3:2}
The second order EIN-LDG scheme, which is the second order
IMEX scheme (\ref{imex:2})
coupled with the LDG method (\ref{LDG:simplify}), reads
\begin{subequations}\label{second:cooper}
\begin{align}
(u^{n,1},v)=&\,(u^n,v)+ \frac12 (a-a_0) \dt\Lcal(q^n,v) + \frac12 a_0 \dt \Lcal(q^{n,1},v),\label{cooper:1}\\
(u^{n+1},v)=&\,(u^n,v) +(a-a_0)\dt  \Lcal(q^{n,1},v)
+\frac12 a_0 \dt[\Lcal(q^{n},v)+\Lcal(q^{n+1},v)], \label{cooper:2}\\
(q^{\nl},r)=&\,\Kcal(u^{\nl},r), \quad \mbox{for} \quad \ell=0,1,2,
\end{align}
\end{subequations}
where $w^{n,0}=w^n$ and $w^{n,2}=w^{n+1}$ for $w=u,q$.

{The same as in the first order scheme,
we only consider the case $a_0\neq a$, since in the
case $a_0 =a$ we can also easily get
(\ref{L2}) unconditionally.}
The stability result is given in the following theorem.
\begin{theorem}
If
{$a_0 \ge \frac{a}{2}$} and $a_0\neq a$,
then the second order EIN-LDG scheme (\ref{second:cooper})
satisfies
\begin{equation} \label{main:2}
\norm{u^{n}}^2+ \frac12 a_0\dt \norm{q^{n}}^2  \le \norm{u^0}^2 + \frac12 a_0 \dt \norm{q^0}^2.
\end{equation}
\end{theorem}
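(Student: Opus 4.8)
The plan is to mirror the energy argument of the first order scheme, the only genuinely new feature being the intermediate stage $u^{n,1}$.

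\textbf{Step 1 (the intermediate stage).} First I would take $v=u^{n,1}$ in (\ref{cooper:1}) and use the property (\ref{property}), which turns $\Lcal(q^n,u^{n,1})$ into $-(q^{n,1},q^n)$ and $\Lcal(q^{n,1},u^{n,1})$ into $-\norm{q^{n,1}}^2$; combining with $2(u^n,u^{n,1})=\norm{u^n}^2+\norm{u^{n,1}}^2-\norm{u^{n,1}-u^n}^2$, the half-integer coefficients cancel and one is left with
\[
\norm{u^{n,1}}^2+\norm{u^{n,1}-u^n}^2-\norm{u^n}^2+a_0\dt\norm{q^{n,1}}^2=(a_0-a)\dt(q^{n,1},q^n),
\]
which is of exactly the form treated in the first order proof. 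Cauchy--Schwarz, Young, and the equivalence $a_0\ge a/2\iff(a_0-a)^2\le a_0^2$ then give
\[
\norm{u^{n,1}}^2+\norm{u^{n,1}-u^n}^2+\tfrac12 a_0\dt\norm{q^{n,1}}^2\le\norm{u^n}^2+\tfrac12 a_0\dt\norm{q^n}^2 ,
\]
so the half-step value already satisfies the target bound; from (\ref{cooper:1}) one also reads off the relation $(I-\tfrac12 a_0\dt G)u^{n,1}=(I-\tfrac12(a_0-a)\dt G)u^n$ between the two levels, where $G$ denotes the LDG diffusion operator $(Gw,v)=\Lcal(q^w,v)$.

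\textbf{Step 2 (reduction of the full step).} Next I would test (\ref{cooper:2}) with $v=\tfrac12(u^n+u^{n+1})$ rather than with $u^{n+1}$: by linearity of $\Kcal$ the auxiliary variable of $\tfrac12(u^n+u^{n+1})$ is $\bar q:=\tfrac12(q^n+q^{n+1})$, and $\tfrac12[\Lcal(q^n,v)+\Lcal(q^{n+1},v)]=\Lcal(\bar q,v)$. Applying (\ref{property}) and polarization collapses (\ref{cooper:2}) to the clean identity
\[
\norm{u^{n+1}}^2-\norm{u^n}^2=2(a_0-a)\dt(\bar q,q^{n,1})-2a_0\dt\norm{\bar q}^2 .
\]
Adding $\tfrac12 a_0\dt(\norm{q^{n+1}}^2-\norm{q^n}^2)=a_0\dt(q^{n+1}-q^n,\bar q)$ and using $2\bar q=q^n+q^{n+1}$, the quadratic terms telescope and one gets the exact identity
\[
E^{n+1}-E^n=2\dt\big(\bar q,\,(a_0-a)q^{n,1}-a_0 q^n\big),\qquad E^n:=\norm{u^n}^2+\tfrac12 a_0\dt\norm{q^n}^2 .
\]

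\textbf{Step 3 (the sign of the residual).} The crux is to show $\big(\bar q,(a_0-a)q^{n,1}-a_0 q^n\big)\le0$. A bare Cauchy--Schwarz bound is too lossy, since $q^{n,1}$ is a half-step backward-Euler image of $q^n$ that is not small in a directly usable way. The clean route is to exploit that, by (\ref{property}), the form $(u,v)\mapsto-\Lcal(q^u,v)=(q^u,q^v)$ is symmetric positive semidefinite: the alternating LDG fluxes make $G$ self-adjoint, so this is a legitimate symmetric reduction, not the misleading spectral bound on the non-normal amplification operator cautioned against in \cite{Levy}. Let $\tilde{G}$ be the self-adjoint, negative semidefinite operator induced by $G$ on the auxiliary space (so that $q^{Gw}=\tilde{G} q^w$). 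Then Step 1 and (\ref{cooper:2}) express $q^{n,1}$ and $q^{n+1}$ as rational functions of $\tilde{G}$ applied to $q^n$ --- for instance $q^{n,1}=(I-\tfrac12 a_0\dt\tilde{G})^{-1}(I-\tfrac12(a_0-a)\dt\tilde{G})q^n$. Diagonalizing $\tilde{G}$, the inequality splits over eigenspaces into scalar statements: with $s\ge0$ ranging over the eigenvalues of $-\dt\tilde{G}$, and $p=\tfrac12 a_0 s$, $r=\tfrac12(a_0-a)s$ --- so that $a_0\ge a/2$ is precisely $|r|\le p$ --- each eigenspace contribution is a positive multiple of $(1+p+r+r^2)(r-p)(1+r+p)$, which is $\le0$ because $r-p\le0$ while $1+r+p\ge1$ and $1+p+r+r^2\ge1$ (and the kernel of $\tilde{G}$ contributes $-a\norm{\cdot}^2\le0$). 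Hence $E^{n+1}\le E^n$, and iterating gives (\ref{main:2}).

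The obstacle is Step 3; Steps 1--2 are bookkeeping parallel to the first order argument. One could instead try to finish Step 3 by hand, testing (\ref{cooper:1}) and (\ref{cooper:2}) with further combinations of $u^n,u^{n,1},u^{n+1}$ and tuning the Young parameters, but that is delicate precisely because $\norm{u^{n,1}-u^n}$ need not be small, so the extra-stage cross term $(\bar q,(a_0-a)q^{n,1})$ resists absorption unless the symmetry above is used.
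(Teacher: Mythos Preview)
Your argument is correct and takes a genuinely different route from the paper's. The paper tests (\ref{cooper:1}) with $u^{n,1}$, the difference (\ref{cooper:2})$-$(\ref{cooper:1}) with $u^{n+1}$, and (\ref{cooper:1}) again with $u^{n,1}-u^n$, then combines these with a free parameter $\sigma$ to obtain the quadratic form $\int \mathbf{q}^\top(\mathbb{A}+\sigma\mathbb{B})\mathbf{q}\,\dx$ in the three stage variables $q^n,q^{n,1},q^{n+1}$. The heart of their proof is Lemma~\ref{lemma:1}, a direct (and fairly lengthy) verification via leading principal minors that $\mathbb{A}+\sigma\mathbb{B}$ is positive definite for some admissible $\sigma$ whenever $a_0>a/2$. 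Your Step~2 instead tests (\ref{cooper:2}) with the midpoint $\tfrac12(u^n+u^{n+1})$, which produces the exact telescoping identity $E^{n+1}-E^n=2\dt(\bar q,(a_0-a)q^{n,1}-a_0 q^n)$ with no dropped positive terms; you then exploit that the LDG diffusion operator $G$ is genuinely self-adjoint on $V_h$ (because the alternating fluxes make $(Gw,v)=-(q^w,q^v)$ symmetric) to diagonalize and reduce to the scalar sign check on $(1+p+r+r^2)(r-p)(1+r+p)$, which is immediate.

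What each buys: the paper's matrix argument is a pure energy method that never invokes an eigen-decomposition, so it is closer in spirit to what one would need for the variable-coefficient or nonlinear extensions the authors flag in Remark~3.1, and it also yields the strictly positive quadratic form used verbatim in the error analysis of Section~\ref{sec4}. Your spectral reduction is shorter and conceptually cleaner for this constant-coefficient model, and you correctly pre-empt the \cite{Levy} objection by noting that self-adjointness (not mere boundedness of eigenvalues of a non-normal amplification matrix) is what makes the mode-by-mode estimate rigorous. Two minor clean-ups: your Step~1 energy bound on $u^{n,1}$ is never used downstream (only the operator relation is), so it can be dropped; and the detour through an induced operator $\tilde G$ on the auxiliary space is unnecessary---diagonalizing $G$ directly on $V_h$ and using $(q^v,q^w)=\sum_i\lambda_i c_i^v c_i^w$ gives the same scalar reduction without worrying about whether $w\mapsto q^w$ is injective.
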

\begin{proof}
Subtracting (\ref{cooper:1}) from (\ref{cooper:2}) we get
\begin{equation} \label{cooper:3}
(u^{n+1}-u^{n,1},v)=(a_0-\frac12 a)\dt \Lcal(q^n,v) + (a-\frac32 a_0)\dt \Lcal(q^{n,1},v) + \frac{a_0}{2}\dt
\Lcal(q^{n+1},v).
\end{equation}

Taking $v=u^{n,1}$ in (\ref{cooper:1}) we have
\begin{equation} \label{1}
\frac12 \norm{u^{n,1}}^2 + \frac12 \norm{u^{n,1}-u^n}^2 - \frac12 \norm{u^n}^2 + \frac12 (a-a_0)  \dt(q^n,q^{n,1})
+\frac12 a_0 \dt \norm{q^{n,1}}^2 = 0,
\end{equation}
where we have used property (\ref{property}).
Taking $v= u^{n+1}$ in (\ref{cooper:3}) we have
\begin{align}\label{2}
&\frac12 \norm{u^{n+1}}^2 + \frac12 \norm{u^{n+1}-u^{n,1}}^2 -
\frac12 \norm{u^{n,1}}^2 +  (a_0-\frac{a}{2})  \dt(q^n,q^{n+1})
\nonumber \\
&+(a-\frac32 a_0)\dt (q^{n,1},q^{n+1})+\frac12 a_0 \dt \norm{q^{n+1}}^2 = 0.
\end{align}
Adding (\ref{1}) and (\ref{2}) together, and multiplying by 2, we get
\begin{align*}
&\norm{u^{n+1}}^2 + \norm{u^{n,1}-u^n}^2 + \norm{u^{n+1}-u^{n,1}}^2 -
\norm{u^{n}}^2 +a_0\dt \left[ \norm{q^{n,1}}^2 + \norm{q^{n+1}}^2\right]\\
& + \dt \left[(a-a_0)(q^n,q^{n,1})+(2a_0-a) (q^n,q^{n+1})
+(2a-3 a_0) (q^{n,1},q^{n+1})\right]=0.
\end{align*}
Then by adding and subtracting $\delta \dt \norm{q^n}^2$ we obtain
\begin{align} \label{3}
&\norm{u^{n+1}}^2 + \norm{u^{n,1}-u^n}^2 + \norm{u^{n+1}-u^{n,1}}^2 - \norm{u^{n}}^2 \nonumber \\
+& \delta \dt (\norm{q^{n+1}}^2-\norm{q^n}^2) + \dt \int_{\Omega} \mathbf{q}^{\top} \mathbb{A} \mathbf{q} \,
\mathrm{d}x=0,
\end{align}
 where  $\mathbf{q}=(q^n,q^{n,1},q^{n+1})^\top$, and
\begin{equation} \label{A}
\mathbb{A}= \begin{pmatrix} \delta & \frac12 (a-a_0) &  a_0-\frac{a}{2}\\
                            \frac12 (a-a_0) & a_0 &  a-\frac32 a_0 \\
                           a_0-\frac{a}{2} & a-\frac32 a_0  &  a_0 -\delta
                           \end{pmatrix}.
\end{equation}

On the other hand, taking $v=u^{n,1}-u^n$ in (\ref{cooper:1}) we have
\begin{equation*}
\norm{u^{n,1}-u^n}^2 + \frac12 (a-a_0)\dt (q^n,q^{n,1}-q^n) + \frac12 a_0\dt (q^{n,1},q^{n,1}-q^n) = 0,
\end{equation*}
owing to (\ref{property}). That is
\begin{equation} \label{4}
\norm{u^{n,1}-u^n}^2  + \dt \int_{\Omega} \mathbf{q}^{\top} \mathbb{B} \mathbf{q} \,
\mathrm{d}x=0,
\end{equation}
where
\begin{equation} \label{B}
\mathbb{B}= \begin{pmatrix}  \frac12 (a_0-a) &  \frac{a}{4}-\frac{a_0}{2} & 0\\
                            \frac{a}{4}-\frac{a_0}{2} & \frac12 a_0 &  0 \\
                           0 & 0   & 0
                           \end{pmatrix}.
\end{equation}

Adding (\ref{3}) and $\sigma \times$(\ref{4}) together  leads to
\begin{align}
&\,\norm{u^{n+1}}^2 + (1+\sigma)\norm{u^{n,1}-u^n}^2 +
\norm{u^{n+1}-u^{n,1}}^2 - \norm{u^{n}}^2 \nonumber \\
&\,+ \delta \dt (\norm{q^{n+1}}^2-\norm{q^n}^2) + \dt \int_{\Omega} \mathbf{q}^{\top}
 (\mathbb{A}+\sigma \mathbb{B}) \mathbf{q} \,
\mathrm{d}x=0.
\end{align}
Here $0 \le \delta \le a_0$ and $\sigma > -1$ are free parameters.
For convenience, we let $\delta = \frac12 a_0$.
We claim that there exists $\sigma >-1$ such that
the matrix $\mathbb{A}+\sigma \mathbb{B}$ is positive definite
for any $a_0 > \frac12 a$, whose proof will be deferred to
Lemma \ref{lemma:1}.
{We can also verify that
$\mathbb{A}+\sigma \mathbb{B}$ is semi-positive definite
for $a_0=\frac12 a$ if $\sigma=0$, since the eigenvalues of
the matrix are $\frac34 a, \frac14 a$ and $0$ in this situation.}
Thus we can get
\begin{equation}
\norm{u^{n+1}}^2 + \frac12 a_0 \dt \norm{q^{n+1}}^2  \le \norm{u^{n}}^2 + \frac12 a_0 \dt \norm{q^{n}}^2.
\end{equation}
And hence we obtain (\ref{main:2}).
\end{proof}

\begin{lemma} \label{lemma:1}
Let $\delta=\frac12 a_0$, for any $a_0 > \frac12 a$ and $a_0 \neq a$, there exists $\sigma >-1$ such that
$\mathbb{A}+\sigma \mathbb{B}$ is positive definite, where $\mathbb{A}$ and $\mathbb{B}$
are defined in (\ref{A}) and (\ref{B}) respectively.
\end{lemma}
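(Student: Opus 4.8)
The plan is to exhibit an \emph{explicit} value of $\sigma$, depending only on the ratio $a_0/a$, and then certify positive definiteness of $\mathbb{A}+\sigma\mathbb{B}$ by Sylvester's criterion, i.e.\ by checking that its three leading principal minors are strictly positive. Since the entries of $\mathbb{A}$ and $\mathbb{B}$ are homogeneous of degree one in $(a,a_0)$, positive definiteness depends only on $r:=a_0/a$, so I would first normalize to $a=1$; the hypotheses become $r>\tfrac12$, $r\neq1$, and the constraint to satisfy is $\sigma>-1$.

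The candidate I would try is $\sigma=4r-2$, equivalently $\sigma=\dfrac{2(2a_0-a)}{a}$. Note $\sigma>-1$ holds precisely because $r>\tfrac14$, which is implied by $r>\tfrac12$. With $\delta=\tfrac12 a_0$ and this $\sigma$, only the $(1,1),(1,2),(2,2)$ entries of $M:=\mathbb{A}+\sigma\mathbb{B}$ differ from those of $\mathbb{A}$, and a short computation gives $M_{11}=\tfrac12(4r^2-5r+2)$, $M_{12}=\tfrac12 r(3-4r)$, $M_{22}=2r^2$, while $M_{13}=\tfrac12(2r-1)$, $M_{23}=\tfrac12(2-3r)$, $M_{33}=\tfrac12 r$ are unchanged. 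The expected outcome is that the leading principal minors collapse to transparent expressions:
\begin{equation*}
D_1=\frac{4r^2-5r+2}{2},\qquad D_2=\frac{r^2(4r-1)}{4},\qquad D_3=\det M=(2r-1)(r-1)^2,
\end{equation*}
or, without normalization, $D_1=\tfrac{1}{2a}(4a_0^2-5aa_0+2a^2)$, $D_2=\tfrac{1}{4a}a_0^2(4a_0-a)$, and $D_3=(2a_0-a)(a_0-a)^2$. From here the conclusion is immediate: $D_1>0$ since $4r^2-5r+2$ has negative discriminant; $D_2>0$ since $r>\tfrac14$; and $D_3>0$ since $r>\tfrac12$ and $r\neq1$. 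Sylvester's criterion then yields $M\succ0$, which is the assertion.

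The only genuine computation is the determinant $D_3=\det(\mathbb{A}+\sigma\mathbb{B})$, a cubic in $r$ after substitution; the point that needs care is recognizing the factorization $(2r-1)(r-1)^2$. This factorization is also a sanity check on the statement: the double root at $r=1$ is exactly why the case $a_0=a$ must be excluded (there $M$ is at best positive semidefinite, attaining $\det M=0$ at $\sigma=2$, consistent with the degeneration to backward Euler noted before the theorem), and the simple root at $r=\tfrac12$ matches the borderline $a_0=\tfrac a2$ discussed in the remark following the theorem. I would verify the factorization by evaluating $\det M$ at one or two convenient values of $r$ (e.g.\ $r=\tfrac12$, $r=1$, $r=2$). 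Should the explicit $\sigma$ unexpectedly fail on part of the admissible range, the fallback is to keep $\sigma$ as a free parameter, express $D_1,D_2,D_3$ as (at most quadratic) polynomials in $\sigma$, and argue that the three positivity intervals for $\sigma$ have nonempty common intersection for every admissible $r$; but the choice $\sigma=4r-2$ appears to make this detour unnecessary.
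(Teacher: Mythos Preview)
Your proof is correct and genuinely different from the paper's. The paper keeps $\sigma$ as a free parameter and, writing $\theta=a_0/a$, derives three inequalities (one per leading principal minor) that $\sigma$ must satisfy; it then shows, via somewhat laborious case analysis, that the corresponding $\sigma$-intervals have a nonempty intersection lying in $(-1,\infty)$ for every $\theta>\tfrac12$, $\theta\neq1$. Your approach instead fixes the explicit choice $\sigma=4r-2$ (with $r=a_0/a$), which lands automatically in $(-1,\infty)$ once $r>\tfrac14$, and reduces the three minors to the transparent closed forms $D_1=\tfrac12(4r^2-5r+2)$, $D_2=\tfrac14 r^2(4r-1)$, $D_3=(2r-1)(r-1)^2$, whose signs are immediate. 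This is shorter and more conceptual: the factorization of $D_3$ makes visible both the borderline $r=\tfrac12$ (where $D_3$ vanishes, matching the semidefinite case treated separately in the theorem) and the necessity of excluding $r=1$. The paper's route, by contrast, has the minor advantage of showing that a whole open set of admissible $\sigma$ exists, but that extra information is not used anywhere. Your computations check out (in particular $\det M=2r^3-5r^2+4r-1=(2r-1)(r-1)^2$), so no fallback is needed.
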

\begin{proof}
Assume $a_0=\theta a$, then
\begin{equation*}
\mathbb{A}+\sigma \mathbb{B}=\frac12 a \begin{pmatrix} \theta+\sigma (\theta-1) &  1-\theta+\sigma(\frac12-\theta) & 2\theta-1\\
                            1-\theta+\sigma(\frac12-\theta)  & (2+\sigma)\theta &  2-3\theta \\
                          2\theta-1 & 2-3\theta  & \theta
                           \end{pmatrix}.
\end{equation*}
To ensure $\mathbb{A}+\sigma \mathbb{B}$ is positive definite, we require all the leading principle minors
 are positive, namely
\begin{subequations} \label{8}
 \begin{align}& \sigma (\theta-1)+\theta>0,  \label{5} \\
              &  -\frac14 \sigma^2 +(\theta^2+\theta-1)\sigma+\theta^2+2\theta-1>0, \label{6} \\
              &  -\frac14 \theta \sigma^2 +(6\theta^2-7\theta+2)\sigma -(4\theta^3+4\theta^2-11\theta+4)>0. \label{7}
  \end{align}
\end{subequations}
In what follows, we will prove the solution ($\sigma$) of (\ref{8}) exists provided that $\theta>\frac12$ and $\theta\neq 1$.
From (\ref{6}), we get
\begin{equation} \label{9}
2(\theta^2+\theta-1)-2\sqrt{\Delta_1}<\sigma<2(\theta^2+\theta-1)+2\sqrt{\Delta_1},
\end{equation}
where $\Delta_1 = (\theta^2+\theta-1)^2+\theta^2+2\theta-1=\theta^4+2\theta^3$, which is always positive if $\theta>\frac12$. From (\ref{7}) we get
\begin{equation} \label{10}
\frac{2(6\theta^2-7\theta+2)-2\sqrt{\Delta_2}}{\theta}<\sigma<\frac{2(6\theta^2-7\theta+2)+2\sqrt{\Delta_2}}{\theta},
\end{equation}
where $\Delta_2 = (6\theta^2-7\theta+2)^2-\theta(4\theta^3+4\theta^2-11\theta+4)=32\theta^4 -88 \theta^3 + 84 \theta^2 - 32 \theta +4$
which is positive if $\theta>\frac12$ and $\theta\neq 1$.

To simplify the notations,
we denote $A_1=2(\theta^2+\theta-1)-2\sqrt{\Delta_1}$, $A_2=2(\theta^2+\theta-1)+2\sqrt{\Delta_1}$,
$B_1=\frac{2(6\theta^2-7\theta+2)-2\sqrt{\Delta_2}}{\theta}$, and
$B_2=\frac{2(6\theta^2-7\theta+2)+2\sqrt{\Delta_2}}{\theta}$. Since
$$
(A_1-B_2)(A_2-B_1)=-\frac{4}{\theta}\left[ 2\sqrt{\Delta_1 \Delta_2} + (12 \theta^4-5 \theta^3 -24 \theta^2 + 28 \theta -8)\right]
<0 \quad \mbox{if} \quad \theta> \frac12,
$$
we can conclude that the intersection of (\ref{9}) and (\ref{10}) is not empty.

In addition, from (\ref{5}) we get
\begin{equation}
\begin{cases} \sigma < \frac{\theta}{1-\theta} & \mbox{if} \quad \frac12 < \theta<1, \\
              \sigma > - \frac{\theta}{\theta-1} & \mbox{if} \quad \theta>1.
              \end{cases}
\end{equation}
So, if $\frac12 <\theta<1$, then we require $\max\{A_1,B_1\}< \frac{\theta}{1-\theta}$.
This condition can be verified by noticing that
$$
2(\theta^2+\theta-1)-\frac{\theta}{1-\theta}<0 \quad \mbox{and}
\quad \frac{2(6\theta^2-7\theta+2)}{\theta}-\frac{\theta}{1-\theta} < 0,
$$
for $\frac12 <\theta<1$.
If $\theta>1$, we require $\min\{A_2,B_2\}> -\frac{\theta}{\theta-1}$, it holds obviously
since
$$
2(\theta^2+\theta-1)+\frac{\theta}{\theta-1}>0 \quad \mbox{and}
\quad \frac{2(6\theta^2-7\theta+2)}{\theta}+\frac{\theta}{\theta-1} > 0,
$$
for $\theta>1$. Thus we proved that the solution of (\ref{8}) exists.

Furthermore, we can check that
$\min\{A_2,B_2\} >-1 $ in the case $\theta>1$, and in the case $\frac12<\theta<1$,
$\min \{A_2,B_2, \frac{\theta}{1-\theta}\} >-1 $, so we complete the proof of this lemma.
\end{proof}

{\begin{Remark}
In the above stability analysis for the linear model, it is required to study
the positive definiteness of the matrix $\mathbb{A}+\sigma \mathbb{B}$ which
is a constant matrix. The arguments, however, are not easy to extend to
nonlinear problems, since the corresponding matrix will depend on the numerical
solutions at different intermediate time stages, it will be more complicated
to study the positive definiteness of the matrix. So we need to seek
new techniques to overcome the difficulties, which will be left for
future work.
Even though the analysis for the nonlinear model is not available at present,
the stability analysis for the linear model can provide us with some guidance in designing
schemes for nonlinear diffusion problems.
\end{Remark}}

\section{Optimal error estimates}
\label{sec4}
\setcounter{equation}{0}

With the stability result in the previous section, it is conceptually
straightforward to obtain error estimates for smooth solutions
of the simplified model (\ref{eq:linear}) with $a>0$ being a constant.
We will only give the error estimates for the
second order EIN-LDG scheme (\ref{second:cooper}) as an example.
To this end, we would like to introduce two Gauss-Radau projections, from
$\Ht=\big\{\phi\in L^2(\Omega): \phi|_{I_j}\in H^1(I_j),\,\forall
j=1,\ldots,N\,\big\}$ to $V_h$, denoted by
$\pi_h^{-}$ and $\pi_h^{+}$ respectively.
For any function $p\in \Ht$,
the projections $\pi_h^{\pm}p$ are defined as the unique
element in $V_h$ such that
\begin{subequations}\label{projection}
\begin{alignat}{1}
&  (\pi_h^{-}p-p,v)_{I_j}=0,  \quad\forall v\in \mathcal{P}_{k-1}(I_j),
\quad (\pi_h^{-}p)_{\jfhalf}^{-}=p_{\jfhalf}^{-},
\\
&   (\pi_h^{+}p-p,v)_{I_j}=0,  \quad\forall v\in \mathcal{P}_{k-1}(I_j),
\quad (\pi_h^{+}p)_{\jbhalf}^{+}=p_{\jbhalf}^{+},
\end{alignat}
\end{subequations}
for any $j=1,2,\cdots,N$.
In view of the exact collocation on one endpoint of each element, the
Gauss-Radau projections provide a great help to obtain the optimal
error estimates.

Denote by $\eta=p-\pi_h^{\pm} p$ the projection error.
By a standard scaling argument \cite{Ciarlet}, it is easy to obtain
the following approximation property
\begin{equation} \label{estimate:eta}
\norm{\eta} \leq C h^{\min(k+1,s)} \norm{p}_{H^s},
\end{equation}
where the bounding constant $C>0$ is independent of $h$. Furthermore,
by the definition of the operators $\Lcal$ and $\Kcal$ we have
\begin{equation} \label{proj:property}
\Lcal(p-\pi_h^+p, v)=\Kcal(p-\pi_h^- p, v)=0
\end{equation}
for any $p\in \Ht$ and $v \in V_h$, due to the periodic boundary condition.

Following \cite{WSZ:1},
we introduce three ``reference'' functions, denoted by
$\bm{W}^{(\ell)}=(U^{(\ell)},Q^{(\ell)})$, $\ell=0,1,2$,
associated with the second order IMEX RK time discretization (\ref{imex:2}).
In detail, $U^{(0)}=U$ is the exact solution of problem \eqref{eq:linear}
and then we define
\begin{subequations}\label{Ref:function}
\begin{align}
U^{(1)}&=\, U^{(0)}+\frac12(a-a_0) \dt Q^{(0)}_x+
\frac12 a_0 \dt Q^{(1)}_x,
\label{ref:1} \\
U^{(2)}&=\, U^{(0)}+(a-a_0) \dt Q^{(1)}_x
+\frac12 a_0 \dt (Q^{(0)}_x+Q^{(2)}_x),
\label{ref:2}
\end{align}
where
\begin{equation}
Q^{(\ell)}= U^{(\ell)}_x, \quad \mbox{for} \quad \ell=0,1,2.
\end{equation}
\end{subequations}
For any indices $n$ and $\ell$ under consideration,
the reference function at each stage time level is defined as
$\bm{W}^{n,\ell}=(U^{\nl},Q^{\nl})=\bm{W}^{(\ell)}(x,t^n)$.
Here $\bm{W}^{n,0}=\bm{W}^n$ and $\bm{W}^{n,2}=\bm{W}^{n+1}$.

At each stage time, we denote the error between the exact
(reference) solution and the numerical solution by
${\bm{e}}^{\nl}=(e_u^{\nl},e_q^{\nl})=(U^{\nl}-u^{\nl},Q^{\nl}-q^{\nl})$.
As the standard treatment in finite element analysis, we would like
to divide the error in the form $\bm{e}=\bm{\xi}-\bm{\eta}$, where
\begin{equation}
\bm{\eta}=(\eta_u,\eta_q)=(\pi_h^{-}U-U,\pi_h^{+}Q-Q),
\quad
\bm{\xi}=(\xi_u,\xi_q)=(\pi_h^{-}U-u,\pi_h^{+}Q-q),
\end{equation}
here we have dropped the superscripts $n$ and $\ell$ for simplicity.

We would like to assume that the exact solution $U$ satisfies the
following smoothness
\begin{equation}\label{smooth:assumption}
U \in L^{\infty}(0,T;H^{k+2}),\quad
D_t U \in L^{\infty}(0,T;H^{k+1}),\quad
\mbox{and}\quad
D_t^3 U \in L^{\infty}(0,T;L^2),
\end{equation}
where $D^{\ell}_t U$ is the $\ell$-th order time derivative of $U$.

By the smoothness assumption (\ref{smooth:assumption}),
it follows from (\ref{estimate:eta}) that
the stage projection errors  satisfy
\begin{subequations} \label{estimate:eta:stage}
\begin{equation}
\norm{\eta_u^{n,\ell}} + \norm{\eta_q^{n,\ell}} \le
C h^{k+1}(\norm{U^{\nl}}_{H^{k+1}}+\norm{Q^{\nl}}_{H^{k+1}})\le Ch^{k+1},
\end{equation}
for any $n$ and $\ell=0,1,2$ under consideration. And
owing to the linear structure of the Gauss-Radau projection, we have
\begin{align}
\norm{ \eta_u^{n,1}-\eta_u^n} \le &\, C h^{k+1}\norm{U^{n,1}-U^n}_{H^{k+1}}
\le C h^{k+1}\dt,
\\
\norm{\eta_u^{n+1}-\eta_u^{n,1}}  \le&\,C h^{k+1}\norm{U^{n+1}-U^{n,1}}_{H^{k+1}}\le  Ch^{k+1}\dt.
\end{align}
\end{subequations}
Here the bounding constant $C>0$ depends solely on the smoothness
of the exact solution and is independent of $n, h, \dt$.

In what follows we will focus  on the estimate of the error $\bm{\xi}$.
Notice that the ``reference'' function satisfies the following variational forms
\begin{subequations}\label{second:ref}
\begin{align}
(U^{n,1},v)=&\,(U^n,v)+ \frac12 (a-a_0) \dt\Lcal(Q^n,v) + \frac12 a_0 \dt \Lcal(Q^{n,1},v),\label{sec:ref1}\\
(U^{n+1},v)=&\,(U^n,v) +(a-a_0)\dt  \Lcal(Q^{n,1},v)
+\frac12 a_0 \dt[\Lcal(Q^{n},v)+\Lcal(Q^{n+1},v)]+(\zeta^n,v), \label{sec:ref2}\\
(Q^{\nl},r)=&\,\Kcal(U^{\nl},r), \quad \mbox{for} \quad \ell=0,1,2,
\end{align}
\end{subequations}
where $\zeta^n=\Ocal(\dt^3)$ by the smoothness assumption (\ref{smooth:assumption}).

Subtracting these variational forms from those in scheme (\ref{second:cooper}), in the same order,
we obtain the following error equations
\begin{subequations}\label{error:xi}
\begin{align}
(\xi_u^{n,1},v)=&\,(\xi_u^n,v)+(\eta_u^{n,1}-\eta_u^n,v)+
 \frac12 (a-a_0) \dt\Lcal(\xi_q^n,v) + \frac12 a_0 \dt \Lcal(\xi_q^{n,1},v),\label{xi:1}\\
(\xi_u^{n+1},v)=&\,(\xi_u^n,v)+(\eta_u^{n+1}-\eta_u^n,v) +(a-a_0)\dt  \Lcal(\xi_q^{n,1},v) \nonumber \\
&\,+\frac12 a_0 \dt[\Lcal(\xi_q^{n},v)+\Lcal(\xi_q^{n+1},v)]+(\zeta^n,v), \label{xi:2}\\
(\xi_q^{\nl},r)=&\,(\eta_q^{\nl},r)+\Kcal(\xi_u^{\nl},r),\label{xi:q} \quad \mbox{for} \quad \ell=0,1,2,
\end{align}
\end{subequations}
since  $\Lcal(\eta_q,v)=\Kcal(\eta_u,r)=0$ by  property (\ref{proj:property}).

Subtracting (\ref{xi:1}) from (\ref{xi:2}) we get
\begin{align} \label{xi:3}
(\xi_u^{n+1}-\xi_u^{n,1},v)=&\,(a_0-\frac12 a)\dt \Lcal(\xi_q^n,v)
 + (a-\frac32 a_0)\dt \Lcal(\xi_q^{n,1},v) + \frac{a_0}{2}\dt
\Lcal(\xi_q^{n+1},v) \nonumber \\
&\,+(\eta_u^{n+1}-\eta_u^{n,1},v)+(\zeta^n,v).
\end{align}

Taking $v=\xi_u^{n,1}$ in (\ref{xi:1}), $v=\xi_u^{n+1}$ in (\ref{xi:3}), then proceeding along the
similar line as the stability analysis in Subsection \ref{sec3:2},  we obtain
\begin{align} \label{11}
&\,\norm{\xi_u^{n+1}}^2 + \norm{\xi_u^{n,1}-\xi_u^n}^2 + \norm{\xi_u^{n+1}-\xi_u^{n,1}}^2 - \norm{\xi_u^{n}}^2 \nonumber \\
&\,+ \delta \dt (\norm{\xi_q^{n+1}}^2-\norm{\xi_q^n}^2) + \dt \int_{\Omega} \bm{\xi}_q^{\top} \mathbb{A} \bm{\xi}_q \,
\mathrm{d}x=T_1,
\end{align}
 where  $\bm{\xi}_q=(\xi_q^n,\xi_q^{n,1},\xi_q^{n+1})^\top$,
$\mathbb{A}$ is the same as  in (\ref{A}), and
\begin{align*}
 T_1=&\, 2(\eta_u^{n,1}-\eta_u^n, \xi_u^{n,1})+(a-a_0)\dt (\eta_q^{n,1},\xi_q^n)+a_0 \dt (\eta_q^{n,1},\xi_q^{n,1})
 \nonumber \\
 &\,+2(\eta_u^{n+1}-\eta_u^{n,1},\xi_u^{n+1})+(2a_0-a)\dt (\eta_q^{n+1},\xi_q^n)+(2a-3a_0)\dt(\eta_q^{n+1},\xi_q^{n,1})
 \nonumber \\
 &\, + a_0 \dt (\eta_q^{n+1},\xi_q^{n+1}) + 2(\zeta^n,\xi_u^{n+1}).
 \end{align*}

On the other hand, taking $v=\xi_u^{n,1}-\xi_u^n$ in (\ref{xi:1}) we get
\begin{equation}\label{12}
\norm{\xi_u^{n,1}-\xi_u^n}^2 + \dt \int_{\Omega} \bm{\xi}_q^{\top} \mathbb{B} \bm{\xi}_q \,
\mathrm{d}x=T_2,
\end{equation}
where $\mathbb{B}$ is the same as in (\ref{B}) and
\begin{align}
T_2=&\,(\eta_u^{n,1}-\eta_u^n,\xi_u^{n,1}-\xi_u^{n})+\frac12 (a-a_0)\dt(\eta_q^{n,1}-\eta_q^n,\xi_q^n)
+\frac12 a_0 \dt (\eta_q^{n,1}-\eta_q^n,\xi_q^{n,1}).
\end{align}
A simple use of the Cauchy-Schwarz and Young's inequalities and the properties (\ref{estimate:eta:stage}) we have
$$|T_1+T_2|\le \varepsilon \dt (\norm{\xi_u^n}^2+\norm{\xi_u^{n,1}}^2 + \norm{\xi_u^{n+1}}^2)
 +\varepsilon \dt (\norm{\xi_q^{n}}^2 + \norm{\xi_q^{n,1}}^2+\norm{\xi_q^{n+1}}^2)
+ C(h^{2k+2}\dt+\dt^5),$$
for arbitrary $\varepsilon$.
So adding (\ref{11}) and $\sigma \times$(\ref{12}) together  leads to
\begin{align}
&\norm{\xi_u^{n+1}}^2 + (1+\sigma)\norm{\xi_u^{n,1}-\xi_u^n}^2 +
\norm{\xi_u^{n+1}-\xi_u^{n,1}}^2 - \norm{\xi_u^{n}}^2 \nonumber \\
&+ \delta \dt (\norm{\xi_q^{n+1}}^2-\norm{\xi_q^n}^2) +
\dt \int_{\Omega} \bm{\xi}_q^{\top} (\mathbb{A}+\sigma \mathbb{B}-\varepsilon \mathbb{I}) \bm{\xi}_q \,
\mathrm{d}x \nonumber \\
 \le&\, \varepsilon \dt (\norm{\xi_u^n}^2+\norm{\xi_u^{n,1}}^2 + \norm{\xi_u^{n+1}}^2)
+ C(h^{2k+2}\dt+\dt^5)
\nonumber \\
\le&\, \varepsilon \dt (\norm{\xi_u^n}^2+\norm{\xi_u^{n,1}-\xi_u^n}^2 + \norm{\xi_u^{n+1}-\xi_u^{n,1}}^2)
+ C(h^{2k+2}\dt+\dt^5).
\end{align}
Here $0 \le \delta \le a_0$ and $\sigma > -1$ are free parameters, $\mathbb{I}$ is the identity matrix.

As in the stability analysis, we let $\delta = \frac12 a_0$.
{Since the matrix $\mathbb{A}+\sigma \mathbb{B}$ is symmetric,
from Lemma \ref{lemma:1}
we conclude that, for $a_0 > \frac12 a$ there exists $\sigma >-1$ such that
the matrix $\mathbb{A}+\sigma \mathbb{B}-\varepsilon \mathbb{I}$ is also positive definite,
by choosing $\varepsilon$ small enough such that $\varepsilon \le \frac12 \lambda$,
where $\lambda$ is the smallest positive eigenvalue of the
matrix $\mathbb{A}+\sigma \mathbb{B}$.
Note that in the case $a_0 =\frac12 a$, we are not able to ensure the positive definiteness of
the matrix $\mathbb{A}+\sigma \mathbb{B}-\varepsilon \mathbb{I}$, since in this case
the matrix  $\mathbb{A}+\sigma \mathbb{B}$ is only semi-positive definite.}
Thus for $a_0 > \frac12 a$, using the discrete Gronwall's inequality yields
\begin{equation}
\norm{\xi_u^{n+1}}^2 + \frac12 a_0 \dt \norm{\xi_q^{n+1}}^2  \le
\norm{\xi_u^0}^2+\frac12 a_0 \dt \norm{\xi_q^{0}}^2  + C(h^{2k+2}+\dt^4).
\end{equation}
Taking $u^0=\pi_h^- U^0$ we get $\xi_u^0=0$ and hence from (\ref{xi:q})
we get $\norm{\xi_q^0}\le \norm{\eta_q^0}\le Ch^{k+1}$,
so we are led to
\begin{equation} \label{est:xi}
\norm{\xi_u^n} \le C(h^{k+1}+\dt^2).
\end{equation}

Finally we obtain the following theorem by (\ref{estimate:eta:stage}), (\ref{est:xi})
and the triangle inequality.

\begin{theorem}
Let $U(x,t)$ be the exact solution of equation (\ref{eq:linear}) satisfying the
smoothness assumption (\ref{smooth:assumption}), and let $u^n$ be the solution
of the second order EIN-LDG scheme (\ref{second:cooper}). Then
if $a_0 > \frac{a}{2}$ we have
\begin{equation} \label{main:3}
\max_{n \dt \le T}\norm{U(x,t^n)-u^{n}}  \le C(h^{k+1}+\dt^2),
\end{equation}
where $C$ is a bounding constant independent of $n,h,\dt$.
\end{theorem}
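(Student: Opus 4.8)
The plan is to transplant the energy argument of Subsection~\ref{sec3:2} onto the error equations, spending a small portion of the coercivity supplied by Lemma~\ref{lemma:1} to soak up the time--truncation and projection errors; most of the scaffolding (the reference functions, the error splitting, the error equations) has in fact already been assembled above. First I would recall that the reference functions $\bm{W}^{(\ell)}$ defined in (\ref{Ref:function}) satisfy the scheme's variational forms exactly except for the consistency defect $\zeta^n=\Ocal(\dt^3)$ appearing in (\ref{sec:ref2}); this is the only place the full strength of the smoothness hypothesis (\ref{smooth:assumption}), in particular $D_t^3U\in L^\infty(0,T;L^2)$, is used. Writing $\bm{e}=\bm{\xi}-\bm{\eta}$ through the Gauss--Radau projections $\pi_h^-$ (on $U$) and $\pi_h^+$ (on $Q$), the auxiliary-variable flux contributions cancel by (\ref{proj:property}), so subtracting the reference forms from the scheme leaves the clean error equations (\ref{error:xi}) and (\ref{xi:3}); their only inhomogeneities are the projection increments $\eta_u^{n,1}-\eta_u^n$, $\eta_u^{n+1}-\eta_u^{n,1}$, the stage errors $\eta_q^{n,\ell}$, and $\zeta^n$, all governed by (\ref{estimate:eta:stage}).

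Next I would repeat the stability proof's test-function choices verbatim: $v=\xi_u^{n,1}$ in (\ref{xi:1}), $v=\xi_u^{n+1}$ in (\ref{xi:3}), and $v=\xi_u^{n,1}-\xi_u^n$ in (\ref{xi:1}), and then form (\ref{11}) plus $\sigma$ times (\ref{12}) with $\delta=\frac12 a_0$. The left-hand side reproduces the stability identity of Subsection~\ref{sec3:2} --- a telescoping $\norm{\xi_u^{n+1}}^2-\norm{\xi_u^n}^2$ together with $\frac12 a_0\dt(\norm{\xi_q^{n+1}}^2-\norm{\xi_q^n}^2)$, the nonnegative squares $(1+\sigma)\norm{\xi_u^{n,1}-\xi_u^n}^2+\norm{\xi_u^{n+1}-\xi_u^{n,1}}^2$, and the quadratic form $\dt\int_\Omega\bm{\xi}_q^\top(\mathbb{A}+\sigma\mathbb{B})\bm{\xi}_q\,\dx$ with $\mathbb{A},\mathbb{B}$ from (\ref{A})--(\ref{B}) --- while the right-hand side is the residual $T_1+T_2$. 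A routine Cauchy--Schwarz/Young estimate, distributing a small $\varepsilon$ to each occurrence of $\norm{\xi_q^n},\norm{\xi_q^{n,1}},\norm{\xi_q^{n+1}}$ and of $\norm{\xi_u^n},\norm{\xi_u^{n,1}},\norm{\xi_u^{n+1}}$, and using $\norm{\eta_q^{n,\ell}}\le Ch^{k+1}$, $\norm{\eta_u^{n,1}-\eta_u^n}+\norm{\eta_u^{n+1}-\eta_u^{n,1}}\le Ch^{k+1}\dt$ and $\norm{\zeta^n}\le C\dt^3$, then gives $|T_1+T_2|\le\varepsilon\dt\big(\norm{\xi_q^n}^2+\norm{\xi_q^{n,1}}^2+\norm{\xi_q^{n+1}}^2\big)+\varepsilon\dt\big(\norm{\xi_u^n}^2+\norm{\xi_u^{n,1}}^2+\norm{\xi_u^{n+1}}^2\big)+C(h^{2k+2}\dt+\dt^5)$.

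The decisive step is to invoke Lemma~\ref{lemma:1}: for $a_0>\frac12 a$ strictly (and $a_0\ne a$) there is a $\sigma>-1$ with $\mathbb{A}+\sigma\mathbb{B}$ symmetric positive definite, hence with smallest eigenvalue $\lambda>0$; choosing $\varepsilon\le\frac12\lambda$ keeps $\mathbb{A}+\sigma\mathbb{B}-\varepsilon\bbI$ positive definite, so the $\bm{\xi}_q$ quadratic form can be dropped to the left. I would rewrite the surviving $\varepsilon\dt\norm{\xi_u^{n,1}}^2$ and $\varepsilon\dt\norm{\xi_u^{n+1}}^2$, via the triangle inequality, as $C\varepsilon\dt\big(\norm{\xi_u^n}^2+\norm{\xi_u^{n,1}-\xi_u^n}^2+\norm{\xi_u^{n+1}-\xi_u^{n,1}}^2\big)$, and absorb the two increment norms into the $(1+\sigma)$- and $1$-coefficient squares on the left (legitimate once $\dt$ is small), leaving only $C\varepsilon\dt\norm{\xi_u^n}^2$ on the right. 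Summing the resulting one-step inequality over $0\le m\dt\le T$ telescopes the $\norm{\xi_u^n}^2$ and $\frac12 a_0\dt\norm{\xi_q^n}^2$ contributions, the accumulated residual is $\sum C(h^{2k+2}\dt+\dt^5)\le C(h^{2k+2}+\dt^4)$ because there are $\Ocal(\dt^{-1})$ steps, and the discrete Gronwall inequality disposes of the $C\varepsilon\dt\norm{\xi_u^n}^2$ terms. Taking $u^0=\pi_h^-U^0$ makes $\xi_u^0=0$, whence (\ref{xi:q}) at $n=\ell=0$ yields $\norm{\xi_q^0}\le\norm{\eta_q^0}\le Ch^{k+1}$, so $\norm{\xi_u^n}\le C(h^{k+1}+\dt^2)$; the theorem follows by the triangle inequality $\norm{U(x,t^n)-u^n}\le\norm{\xi_u^n}+\norm{\eta_u^n}$ together with (\ref{estimate:eta:stage}).

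I expect the sole genuinely delicate point to be exactly the one that forces the strict inequality $a_0>\frac12 a$: at $a_0=\frac12 a$ the matrix $\mathbb{A}+\sigma\mathbb{B}$ is only positive semidefinite (its eigenvalues are $\frac34 a,\frac14 a,0$), so there is no spectral gap left to absorb the $-\varepsilon\bbI$ perturbation generated by the consistency and projection errors, and the $\xi_q$-coercivity that drives the Gronwall step collapses --- which is precisely why the error estimate, unlike the stability bound, must exclude $a_0=\frac12 a$. Once Lemma~\ref{lemma:1} is granted, everything else --- the choice of $\sigma$, the $\varepsilon$-budgeting, and the bookkeeping of powers of $h$ and $\dt$ --- is mechanical.
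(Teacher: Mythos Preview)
Your proposal is correct and follows the paper's proof essentially step for step: the same reference functions and error splitting, the same test-function choices $v=\xi_u^{n,1}$, $v=\xi_u^{n+1}$, $v=\xi_u^{n,1}-\xi_u^n$, the same combination (\ref{11})$+\sigma\times$(\ref{12}) with $\delta=\tfrac12 a_0$, the same invocation of Lemma~\ref{lemma:1} with $\varepsilon\le\tfrac12\lambda$, the same absorption of $\norm{\xi_u^{n,1}}^2$ and $\norm{\xi_u^{n+1}}^2$ into $\norm{\xi_u^n}^2$ plus increment norms, and the same Gronwall finish with $u^0=\pi_h^-U^0$. You have also correctly identified why the strict inequality $a_0>\tfrac12 a$ is essential here but not in the stability theorem.
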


\section{Numerical Experiments}
\label{sec5}
\setcounter{equation}{0}

In this section, we will numerically validate the accuracy and performance of
the LDG spatial discretization (\ref{var:nonlinear}) coupled with
the first and second order IMEX schemes (\ref{imex:1}) and (\ref{imex:2}).
In addition, we would like to test for a third order IMEX scheme proposed in \cite{Ascher}
\begin{equation} \label{imex:3}
{\renewcommand\arraystretch{1.3}
\begin{array}{c|ccccc|ccccc}
0 &0  & 0 & 0 & 0 &0  & 0 & 0 & 0 &0 & 0\\
\frac12 &0& \frac12 & 0 & 0 & 0 & \frac12 & 0 & 0 & 0 & 0 \\
\frac23 & 0 & \frac16 & \frac12 & 0 & 0 &
\frac{11}{18} &\frac{1}{18} & 0 & 0 & 0 \\
\frac12 & 0 & -\frac12 & \frac12 & \frac12 & 0 & \frac56 & -\frac56 & \frac12 & 0& 0 \\
1 & 0 & \frac32 &-\frac32 & \frac12 & \frac12 & \frac14 & \frac74 & \frac34 & -\frac74 & 0 \\
\hline
  & 0 & \frac32 &-\frac32 & \frac12 & \frac12 & \frac14 & \frac74 & \frac34 & -\frac74 & 0
\end{array} }
\end{equation}

\subsection{The stability and accuracy test}

In this subsection we test the stability and accuracy of the proposed schemes.
We will consider two examples. In each example, the source term  $f(x,t)$
is chosen properly such that the exact solution satisfies the given equation.
The final computing time is $T=10$ and uniform meshes are adopted
for all tests in this subsection. In addition, we take piecewise
constant, piecewise linear and piecewise quadratic polynomials in
the LDG spatial discretization for the first
order, the second order and the third order IMEX time discretization,
respectively,
{such that the orders accuracy of errors in
space and time match if the time step $\dt=\Ocal(h)$.}

\medskip

\noindent{\emph{Example 1}}. The diffusion equation $u_t =(a(u) u_x)_x+f(x,t)$
with exact solution
$$u(x,t)=\sin(x-t)$$
defined on $[-\pi,\pi]$. We will consider
three cases:

\noindent (i) $a(u)=\frac12$, \quad (ii) $a(u)=u^2+1$, \quad (iii) $a(u)=\sin^2 u$.

For this example, the time step is $\dt=h$,
where $h=2\pi/N$ is the mesh size, $N$ is the number of elements.

In Tables \ref{table1}-\ref{table3}, we list the $L^2$ errors and
orders of accuracy for the three cases.
 In each table, we display the numerical results of the
three IMEX schemes (\ref{imex:1}), (\ref{imex:2}) and (\ref{imex:3})
coupled with the LDG method (\ref{var:nonlinear}) with different $a_0$.
From these tables, we see that the first and second order EIN-LDG schemes
are stable and can achieve optimal error accuracy
{in both space and time} if
$a_0 \ge \max\{a(u^0)\}/2$,
where $u^0$ is the approximation of the initial solution.
From the experiment we also find that the smallest $a_0$ to ensure the
stability of the third order EIN-LDG scheme is about $0.54 \max\{ a(u^0)\}$,
and we observe optimal error accuracy in both space and time if $a_0 \ge 0.54\max\{a(u^0)\}$.
From the numerical results we can also find that larger $a_0$ may cause
larger errors.

\begin{table}[htp]
\caption{The $L^2$ errors and orders of accuracy for Example 1: $a(u)=1/2$.}
\begin{center}\footnotesize
\renewcommand{\arraystretch}{1.2}
\begin{tabular}{c|c|cc|cc|cc|cc}
\hline
\multirow{2}{*}{schemes}&\multirow{2}{*}{$N$}
& $L^2$ error &  order    & $L^2$ error &  order
& $L^2$ error & order    & $L^2$ error & order \\
\cline{3-10}
 &          & \multicolumn{2}{|c|}{$a_0=0.24$}
&\multicolumn{2}{c|}{$a_0=0.25$} &\multicolumn{2}{c|}{$a_0=1$}
&\multicolumn{2}{c}{$a_0=10$}\\
\hline
&80   &8.04E-02&    -  & 8.09E-02&    -    &1.40E-01&    -& 7.94E-01&    -\\
(\ref{imex:1})&160  &4.02E-02&    1.00& 4.04E-02&    1.00    &7.13E-02&    0.97& 4.84E-01&    0.71\\
with &320  &2.01E-02&    1.00& 2.02E-02&    1.00   &3.60E-02&    0.98& 2.73E-01&    0.83\\
$k=0$&640  & 1.73E+11&  -42.97& 1.01E-02&    1.00   &1.81E-02&    0.99& 1.46E-01&    0.90\\
&1280  &4.33E+46& -117.59& 5.05E-03&    1.00    &9.08E-03&    1.00& 7.56E-02&    0.95\\
\hline
  \multicolumn{2}{c|}{}    & \multicolumn{2}{|c|}{$a_0=0.24$}
&\multicolumn{2}{c|}{$a_0=0.25$} &\multicolumn{2}{c|}{$a_0=1$}
&\multicolumn{2}{c}{$a_0=10$}\\
\hline
&80   &3.90E+09&  - & 8.80E-04&    -    &2.02E-03&    -& 1.24E-01&    -\\
(\ref{imex:2}) &160  &3.09E+25&  -52.82& 2.19E-04&    2.00    &5.17E-04&    1.97& 4.13E-02&    1.58\\
with&320  & 7.67E+57& -107.61& 5.48E-05&    2.00   &1.31E-04&    1.98& 1.22E-02&    1.76\\
$k=1$&640  & 1.53+123& -216.92& 1.37E-05&    2.00   &3.30E-05&    1.99& 3.33E-03&    1.87\\
&1280  &Infinity&    -Inf& 3.42E-06&    2.00    &8.28E-06&    1.99& 8.73E-04&    1.93\\
\hline
 \multicolumn{2}{c|}{}        & \multicolumn{2}{|c|}{$a_0=0.26$}
&\multicolumn{2}{c|}{$a_0=0.27$} &\multicolumn{2}{c|}{$a_0=1$}
&\multicolumn{2}{c}{$a_0=10$}\\
\hline
&80   &6.37E+12 & -     &6.92E-06 & -    &5.10E-05 & -    &3.31E-02 &-  \\
(\ref{imex:3})&160   &3.67E+31&  -62.32 & 8.67E-07&    3.00    &6.58E-06&    2.95& 6.49E-03&    2.35\\
with &320  &1.26E+70& -128.01& 1.09E-07&    2.99    &8.41E-07&    2.97& 1.05E-03&    2.63\\
$k=2$&640  & 8.64+147& -258.56& 1.36E-08&    3.00   &1.06E-07&    2.99& 1.51E-04&    2.80\\
&1280  & NaN&     NaN& 1.73E-09&    2.98   &1.31E-08&    3.02& 2.03E-05&    2.90\\
\hline
\end{tabular}
\end{center}
\label{table1}
\end{table}

\begin{table}[htp]
\caption{The $L^2$ errors and orders of accuracy for Example 1: $a(u)=u^2+1$.}
\begin{center}\footnotesize
\renewcommand{\arraystretch}{1.2}
\begin{tabular}{c|c|cc|cc|cc|cc}
\hline
\multirow{2}{*}{schemes}&\multirow{2}{*}{$N$}
& $L^2$ error &  order    & $L^2$ error &  order
& $L^2$ error & order    & $L^2$ error & order \\
\cline{3-10}
 &          & \multicolumn{2}{|c|}{$a_0=0.85$}
&\multicolumn{2}{c|}{$a_0=0.9$} &\multicolumn{2}{c|}{$a_0=1$}
&\multicolumn{2}{c}{$a_0=10$}\\
\hline
&80   &9.73E-02&    -  & 1.01E-01&    -    &1.08E-01&    -& 6.72E-01&    -\\
(\ref{imex:1})&160  &4.90E-02&    0.99& 5.09E-02&    0.99    &5.47E-02&    0.98& 3.91E-01&    0.78\\
with &320  &2.46E-02&    0.99&  2.56E-02&    0.99   &2.75E-02&    0.99& 2.11E-01&    0.89\\
$k=0$&640  & NaN&     NaN& 1.28E-02&    1.00   &1.38E-02&    1.00& 1.10E-01&    0.94\\
&1280  &NaN&     NaN& 6.42E-03&    1.00    &6.90E-03&    1.00& 5.61E-02&    0.97\\
\hline
  \multicolumn{2}{c|}{}    & \multicolumn{2}{|c|}{$a_0=0.95$}
&\multicolumn{2}{c|}{$a_0=0.98$} &\multicolumn{2}{c|}{$a_0=1$}
&\multicolumn{2}{c}{$a_0=10$}\\
\hline
&80    & 1.04E-03&    -   &1.03E-03&    - &1.02E-03&    -& 8.64E-02&    -\\
(\ref{imex:2}) &160  & 2.62E-04&    1.99 & 2.59E-04&    1.99   &2.57E-04&    1.98& 2.83E-02&    1.61\\
with&320  & 6.65E-05&    1.98 & 6.61E-05&    1.97  &6.56E-05&    1.97& 8.32E-03&    1.76\\
$k=1$&640  & NaN&     NaN & 1.65E-05&    2.00 &1.64E-05&    2.00& 2.27E-03&    1.88\\
&1280  & NaN&     NaN  & 4.12E-06&    2.00  &4.13E-06&    1.99& 5.95E-04&    1.93\\
\hline
 \multicolumn{2}{c|}{}        & \multicolumn{2}{|c|}{$a_0=1$}
&\multicolumn{2}{c|}{$a_0=1.05$} &\multicolumn{2}{c|}{$a_0=1.1$}
&\multicolumn{2}{c}{$a_0=10$}\\
\hline
&80   &3.56E-05 & -     &3.88E-05 & -    &4.23E-05 & -    &2.17E-02 &-  \\
(\ref{imex:3})&160   &4.73E-06&    2.91 & 5.23E-06&    2.89   &5.77E-06&    2.87& 4.25E-03&    2.35\\
with &320  &NaN&     NaN& 7.45E-07&    2.81    &8.30E-07&    2.80& 6.97E-04&    2.61\\
$k=2$&640  & NaN&     NaN& 9.52E-08&    2.97   &1.07E-07&    2.96& 1.01E-04&    2.79\\
&1280  & NaN&     NaN& 1.27E-08&    2.91   &1.40E-08&    2.93& 1.37E-05&    2.88\\
\hline
\end{tabular}
\end{center}
\label{table2}
\end{table}

\begin{table}[htp]
\caption{The $L^2$ errors and orders of accuracy for Example 1: $a(u)=\sin^2u$.}
\begin{center}\footnotesize
\renewcommand{\arraystretch}{1.2}
\begin{tabular}{c|c|cc|cc|cc|cc}
\hline
\multirow{2}{*}{schemes}&\multirow{2}{*}{$N$}
& $L^2$ error &  order    & $L^2$ error &  order
& $L^2$ error & order    & $L^2$ error & order \\
\cline{3-10}
 &          & \multicolumn{2}{|c|}{$a_0=0.5\sin^2 1-0.1$}
&\multicolumn{2}{c|}{$a_0=0.5\sin^2 1$} &\multicolumn{2}{c|}{$a_0=1$}
&\multicolumn{2}{c}{$a_0=10$}\\
\hline
&80   &9.68E-02&    -  &1.05E-01&    -    &1.73E-01&    -& 1.06E+00&    -\\
(\ref{imex:1})&160  &4.89E-02&    0.99& 5.34E-02&    0.98    &8.99E-02&    0.95& 5.89E-01&    0.85\\
with &320  &1.14E+00&   -4.54&2.70E-02&    0.99   &4.60E-02&    0.97& 3.20E-01&    0.88\\
$k=0$&640  &1.54E+00&   -0.43&1.35E-02&    0.99   &2.34E-02&    0.98& 1.71E-01&    0.91\\
&1280  &1.44E+00&    0.09&6.80E-03&    1.00    &1.18E-02&    0.99& 8.93E-02&    0.93\\
\hline
  \multicolumn{2}{c|}{}    & \multicolumn{2}{|c|}{$0.5\sin^2 1-0.1$}
&\multicolumn{2}{c|}{$0.5\sin^2 1$} &\multicolumn{2}{c|}{$a_0=1$}
&\multicolumn{2}{c}{$a_0=10$}\\
\hline
&80    & 2.41E+01&    -   &1.01E-03&    - &3.41E-03&    -& 1.49E-01&    -\\
(\ref{imex:2}) &160  &3.67E+01&   -0.61 & 2.58E-04&    1.97   &9.05E-04&    1.91& 5.05E-02&    1.56\\
with&320  & 7.41E+01&   -1.01 &6.55E-05&    1.98  &2.34E-04&    1.95& 1.52E-02&    1.73\\
$k=1$&640  &1.52E+02&   -1.04 &  1.65E-05&    1.99 &5.97E-05&    1.97&4.22E-03&    1.85\\
&1280  & 3.34E+02&   -1.13 & 4.14E-06&    1.99  &1.51E-05&    1.99&  1.12E-03&    1.92\\
\hline
 \multicolumn{2}{c|}{}        & \multicolumn{2}{|c|}{$a_0=0.5\sin^2 1$}
&\multicolumn{2}{c|}{$a_0=0.54\sin^2 1$} &\multicolumn{2}{c|}{$a_0=1$}
&\multicolumn{2}{c}{$a_0=10$}\\
\hline
&80   &4.01E-05 & -     &4.27E-05 & -    &1.87E-04 & -    &4.11E-02 &-  \\
(\ref{imex:3})&160   &5.81E-06&    2.79 & 6.26E-06&    2.77   &2.98E-05&    2.65& 8.28E-03&    2.31\\
with &320  &1.01E+03&  -27.37& 8.78E-07&    2.83    &4.39E-06&    2.76& 1.38E-03&    2.59\\
$k=2$&640  &1.88E+03&   -0.90& 1.16E-07&    2.92   &6.01E-07&    2.87& 2.03E-04&    2.77\\
&1280  &3.25E+03&   -0.79& 1.49E-08&    2.95   &7.92E-08&    2.93& 2.78E-05&    2.87\\
\hline
\end{tabular}
\end{center}
\label{table3}
\end{table}

\medskip

\noindent{\emph{Example 2}}. To test the efficiency of the proposed methods
for problems with large variation of diffusion coefficients, we consider
the diffusion equation
$$u_t=(a(x) u_x)_x+f(x,t)$$
 with the same exact solution as Example 1.
We will consider $a(x)=1+b\sin^2(x)$ for  $b=10,100$ and $1000$.
Obviously, the diffusion coefficient varies from 1 to $1+b$,
and the variation is larger if $b$ is larger.

The choice of $a_0$ and time step in different situations
are given in Table \ref{time-step}. We see that the first and
second order schemes are stable if $a_0 \ge 0.5(1+b)$ and
the third order scheme is stable if $a_0 \ge 0.54(1+b)$.
The numerical results are listed in Table \ref{table4},
from which we can observe optimal orders of accuracy
of the proposed schemes. We also note that,
small mesh size and small time step are required to observe
optimal error accuracy for large $b$.

\begin{table}[h]
\caption{The constant $a_0$ and time step taken in the experiments.}
\centering
\footnotesize
\renewcommand{\arraystretch}{1.3}
\begin{tabular}{|c|c|c|c|}
\hline
\backslashbox{scheme}{$b$} & 10 & 100 & 1000 \\
\hline
(\ref{imex:1}) with $k=0$ &$a_0=6, \dt=0.1h$ &$a_0=51, \dt=0.1h$ &$a_0=501, \dt=0.01h$    \\
\hline
(\ref{imex:2}) with $k=1$&$a_0=6, \dt=0.1h$ &$a_0=51, \dt=0.1h$ &$a_0=501, \dt=0.01h$    \\
\hline
(\ref{imex:3}) with $k=2$&$a_0=6, \dt=0.1h$ &$a_0=55, \dt=0.05h$ &$a_0=540, \dt=0.01h$    \\
\hline
\end{tabular}
\label{time-step}
\end{table}

\begin{table}[htp]
\caption{The $L^2$ errors and orders of accuracy for Example 2: $a(x)=1+b\sin^2(x)$.}
\begin{center}\footnotesize
\renewcommand{\arraystretch}{1.3}
\begin{tabular}{|c|c|cc|cc|cc|}
\hline
\multicolumn{2}{|c}{ } & \multicolumn{2}{|c|}{$b=10$}
&\multicolumn{2}{c|}{$b=100$} &\multicolumn{2}{c|}{$b=1000$}
\\
\hline
scheme &$N$            & $L^2$ error &  order    & $L^2$ error &  order
& $L^2$ error & order   \\
\hline
&80   &4.66E-02 & -     &8.66E-02 & -    &5.19E-02 & -    \\
(\ref{imex:1})&160   &2.33E-02 & 1.00  & 4.73E-02 & 0.87   &2.63E-02 & 0.98\\
with &320  &1.17E-02 & 1.00& 2.48E-02 & 0.93  &1.34E-02 & 0.97\\
$k=0$&640  &5.84E-03 & 1.00& 1.27E-02 & 0.96   &6.76E-03 & 0.99\\
&1280  &2.92E-03  &1.00& 6.46E-03 & 0.98   &3.40E-03 & 0.99\\
\hline
&80   &6.92E-04 & -     &5.02E-03 & -    &2.06E-03 & -    \\
(\ref{imex:2})&160   &1.74E-04  &1.99  & 1.56E-03 & 1.69   &6.40E-04 & 1.69\\
with &320  &4.36E-05 & 2.00& 4.49E-04 & 1.79  &1.84E-04 & 1.80\\
$k=1$&640  &1.09E-05 & 2.00& 1.23E-04 & 1.87   &5.01E-05 & 1.88\\
&1280  &2.73E-06 & 2.00& 3.24E-05 & 1.92   &1.32E-05 & 1.93\\
\hline
&80   &5.60E-06 & -     &1.58E-04 & -    &3.65E-04 & -    \\
(\ref{imex:3})&160   &7.40E-07 & 2.92  &2.79E-05&    2.50   & 7.41E-05&    2.30\\
with &320  &9.66E-08 & 2.94&4.36E-06&    2.68  &1.30E-05&    2.51\\
$k=2$&640  &1.23E-08 & 2.97&6.28E-07&    2.80  &2.04E-06&    2.67\\
&1280  &1.46E-09 & 3.08&8.35E-08&    2.91   &2.88E-07&    2.82\\
\hline
\end{tabular}
\end{center}
\label{table4}
\end{table}

\medskip

From the stability analysis in Section \ref{sec3} and the numerical experiments
in this subsection, we propose a guidance for the choice of $a_0$ for general
model $U_t=(a(U)U_x)_x$, that is,  $a_0 \ge \max\{a(u)\}/2$ for the first
and second order schemes, and $a_0 \ge 0.54\max\{a(u)\}$ for the third order
scheme, where $u$ is the numerical solution at the corresponding time level.
In our experiments, the LU factorization is used as the linear solver,
it will cost more computation to solve a linear system with different
coefficient matrix at each time level. Actually in practical computing, it is
not necessary to scan the maximum of $a(u)$ and adjust $a_0$ at every time step.
In the next two subsections, we will simulate the porous medium equation and the
high-field model, where we adjust $a_0$ after every 100 time steps.

\subsection{Numerical simulation to the porous medium equation}

To further validate the performance of the proposed schemes,
we consider the porous medium equation (PME)
\begin{equation} \label{eq:pme}
u_t = (u^m)_{xx},
\end{equation}
in which $m$ is a constant greater than one. This equation often occurs in nonlinear problems
of heat and mass transfer, combustion theory, and flow in porous media, where $u$ is either a
concentration or a temperature required to be non negative. We assume the initial solution $u_0(x)$ is
a bounded non negative continuous function, then (\ref{eq:pme}) can be written as
\begin{equation} \label{eq:pme'}
u_t = (a(u)u_x)_x,
\end{equation}
with $a(u)=m u^{m-1}$. It is a degenerate parabolic equation since $u$ may be $0$ at some points.
The LDG schemes coupled with the explicit third order
RK time marching for solving this kind of problems were studied in \cite{Zhang:pme},
 where a slope limiter was introduced to ensure the non negativity of the numerical solutions.

In this subsection, we present the numerical results given by the EIN-LDG schemes.
In all the following experiments, we adopt $k=2$ for the spatial discretization,
and the second order scheme (\ref{imex:2}) for the time discretization.
\textcolor[rgb]{0,0,1}
{The same slope limiter as in \cite{Zhang:pme} is adopted at each intermediate stage.
Thanks to the limiter, we can ensure the non negativity of numerical solutions, 
and thus can ensure the diffusion coefficient $a(u)$ is non negative.
Moreover, the physical meaning of $u$ can be maintained, and the possible
numerical oscillation near discontinuous interfaces can be eliminated.}
All the experiments are tested on uniform mesh with mesh size $h=0.02$,
the time step is $\dt=\Ocal(h)$. In the experiments of this subsection,
we adjust $a_0$ after every $100$ time steps, according to the maximum of $a(u)$. We take
$a_0 = \max{a(u)}/2$ at the corresponding time levels.

\medskip

\noindent \emph{Test 1}. Equation (\ref{eq:pme'}) with the Barenblatt solution
\begin{equation}
B_m(x,t) = t^{-s} \left[ \left( 1-\frac{s(m-1)}{2m}\frac{|x|^2}{t^{2s}}\right)_{+}\right]^{1/(m-1)},
\end{equation}
where $u_{+}=\max\{u,0\}$ and $s=1/(m+1)$.
We begin the computation from $t=1$ in order to avoid the singularity of the Barenblatt solution near
$t=0$. The boundary condition is $u(\pm 6, t) = 0$ for $t\ge 1$.
We plot in Figure \ref{fig1} the numerical results for $m=2,3,5,8$ at $t=2$.
From this figure, we see that our scheme can simulate the Barenblatt solution
accurately and sharply, without noticeable oscillations near the interface.

 \begin{figure}[htp]
\centering
\subfigure[$m=2$]{
\begin{minipage}[b]{0.45\textwidth}
   \centering
    \includegraphics[width=3in]{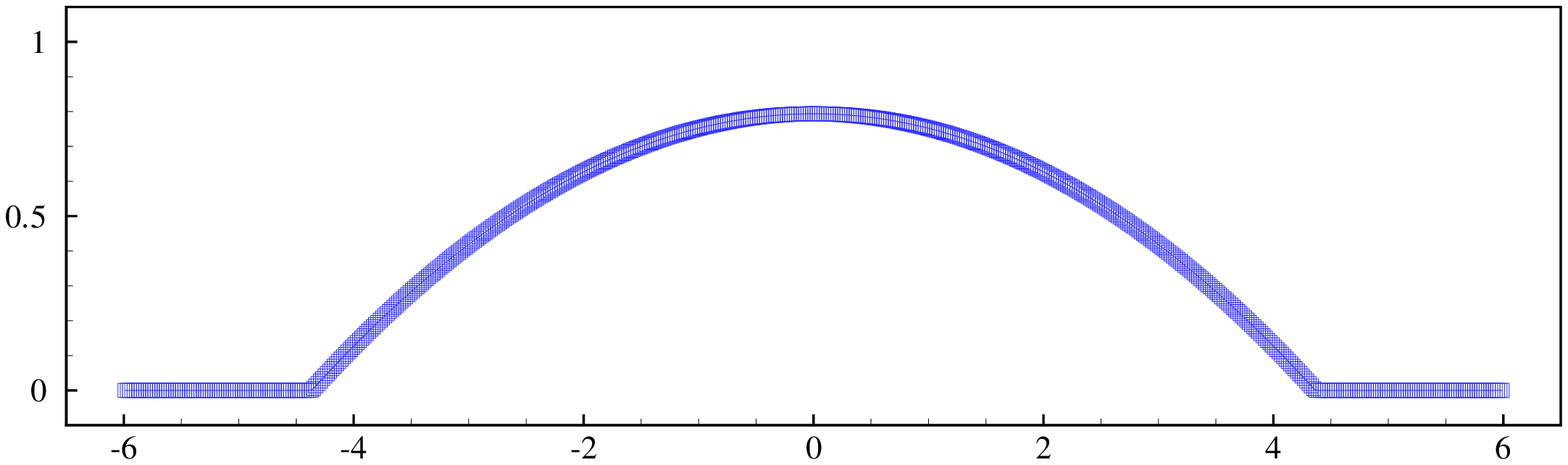}
  \end{minipage}}%
\subfigure[$m=3$]{
\begin{minipage}[b]{0.45\textwidth}
   \centering
    \includegraphics[width=3in]{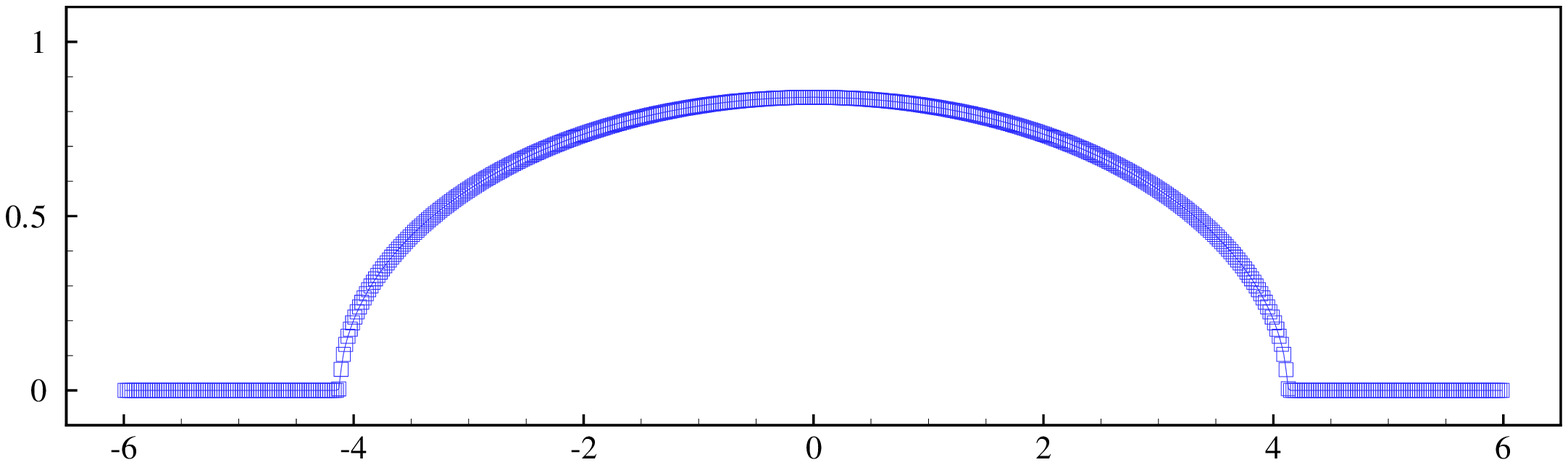}
  \end{minipage}}
\subfigure[$m=5$]{
\begin{minipage}[b]{0.45\textwidth}
   \centering
    \includegraphics[width=3in]{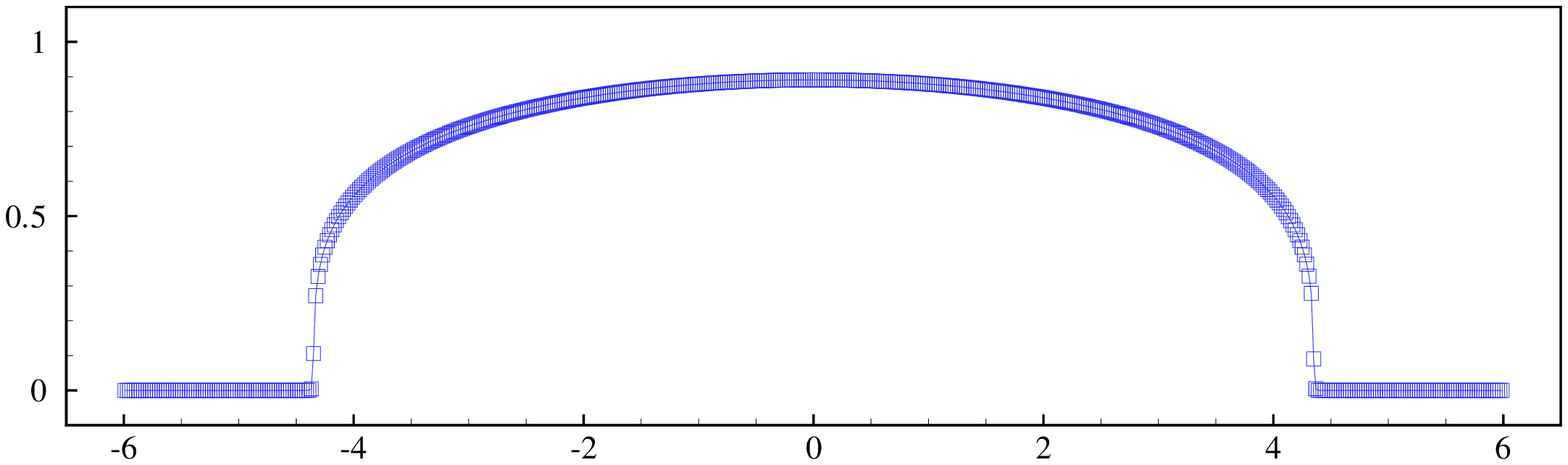}
  \end{minipage}}%
\subfigure[$m=8$]{
\begin{minipage}[b]{0.45\textwidth}
   \centering
    \includegraphics[width=3in]{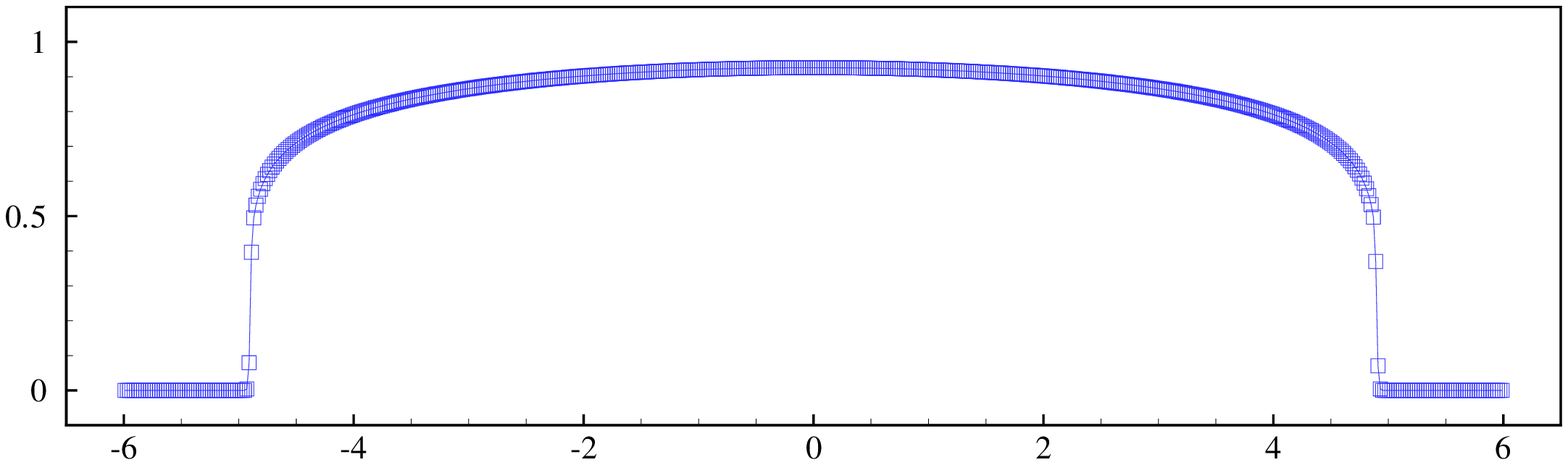}
  \end{minipage}}
\caption{Numerical results for the Barenblatt solution: $t=2$.}
\label{fig1}
\end{figure}

\smallskip

\noindent{\emph{Test 2}.} The collision of two-Box solutions with the same or different
heights. If the variable $u$ is regarded as the temperature, this model can be used to describe
how the temperature changes when two hot spots are suddenly put in the computation domain.
In Figure \ref{fig2} we plot the evolution of the numerical solution for the PME with $m = 5$. The
initial condition is the two-Box solution with the same height, namely
\begin{equation}
u_0(x) = \begin{cases} 1, & \mbox{if} \,\, x \in (-3.7,-0.7)\cup (0.7,3.7) \\
                           0, & \mbox{otherwise}
                           \end{cases}
\end{equation}
with the boundary condition $u(\pm 5.5,t)=0$ for all $t>0$.

In Figure \ref{fig3} we plot the evolution of the numerical solution for the PME with
parameter $m = 8$. The
initial condition is defined as
\begin{equation}
u_0(x) = \begin{cases} 1, & \mbox{if} \,\, x \in (-4,-1), \\
                       1.5, & \mbox{if} \,\, x\in (0,3),  \\
                       0  , & \mbox{otherwise}
                           \end{cases}
\end{equation}
with the boundary condition $u(\pm 6,t)=0$ for all $t>0$.

From these simulations, we can see an analogous evolution whether the heights of the
two boxes in the initial condition are the same or not. Two-Box solutions first move outward
independently before the collision, then they join each other to make the temperature
smooth, and finally the solution becomes almost constant in the common support.

 \begin{figure}[htp]
\centering
\subfigure[t=0.0]{
\begin{minipage}[b]{0.45\textwidth}
   \centering
    \includegraphics[width=3in]{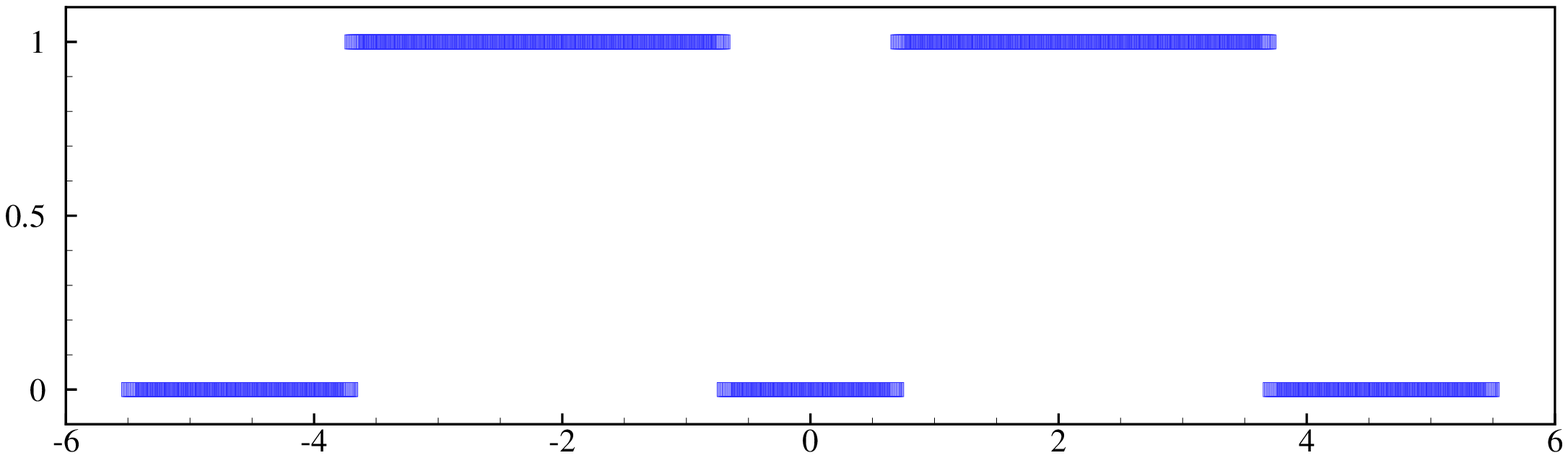}
  \end{minipage}}%
\subfigure[t=0.3]{
\begin{minipage}[b]{0.45\textwidth}
   \centering
    \includegraphics[width=3in]{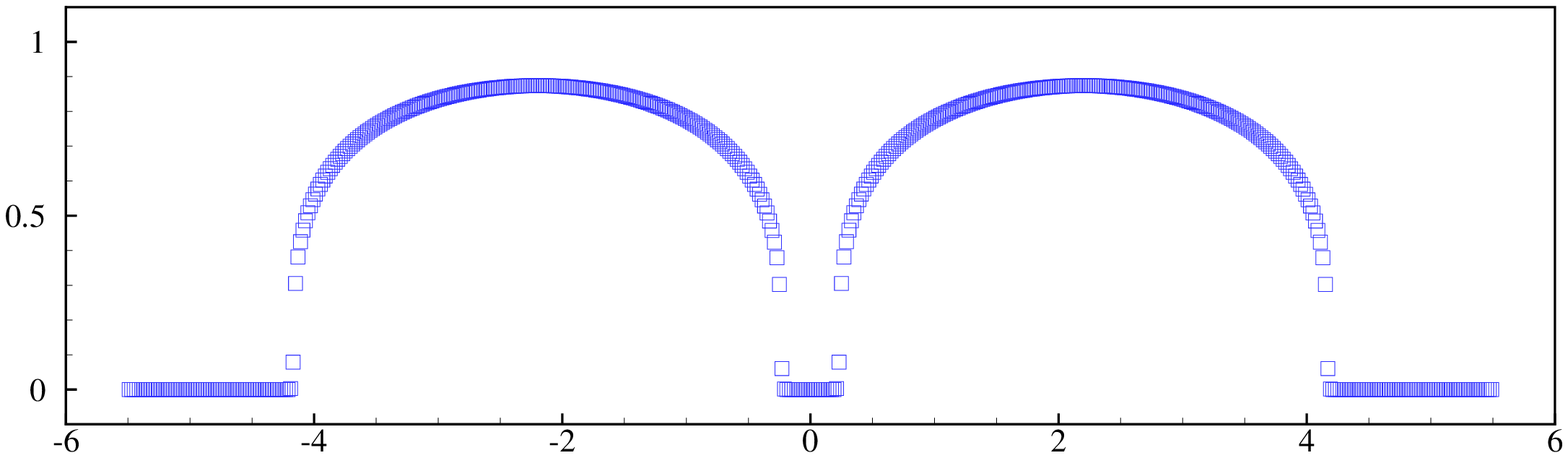}
  \end{minipage}}
\subfigure[t=0.6]{
\begin{minipage}[b]{0.45\textwidth}
   \centering
    \includegraphics[width=3in]{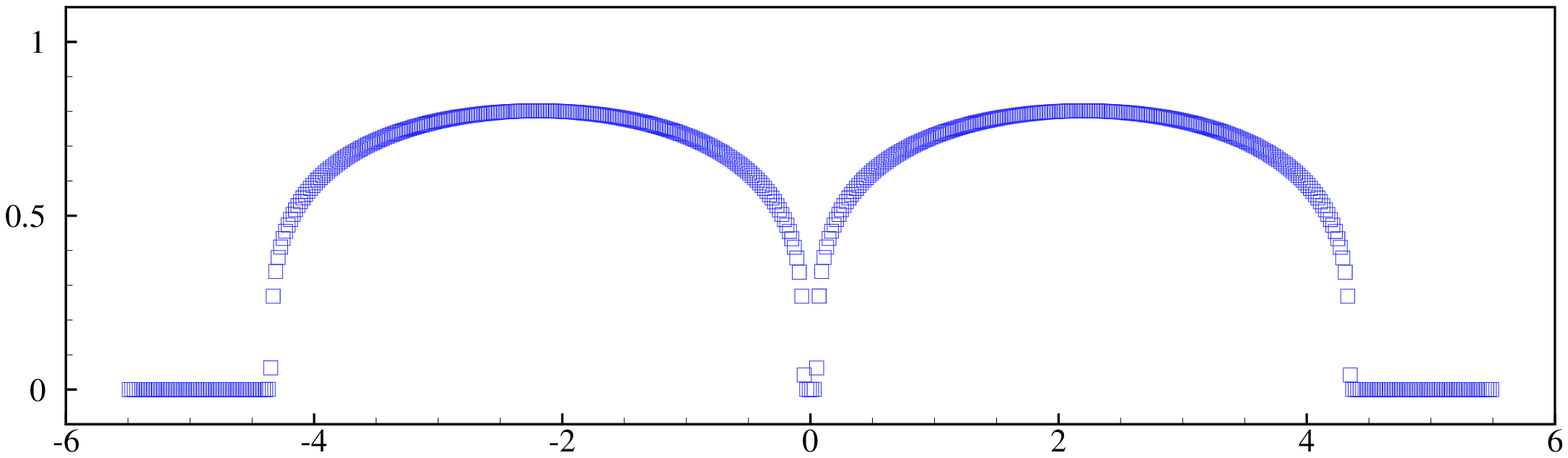}
  \end{minipage}}%
\subfigure[t=0.9]{
\begin{minipage}[b]{0.45\textwidth}
   \centering
    \includegraphics[width=3in]{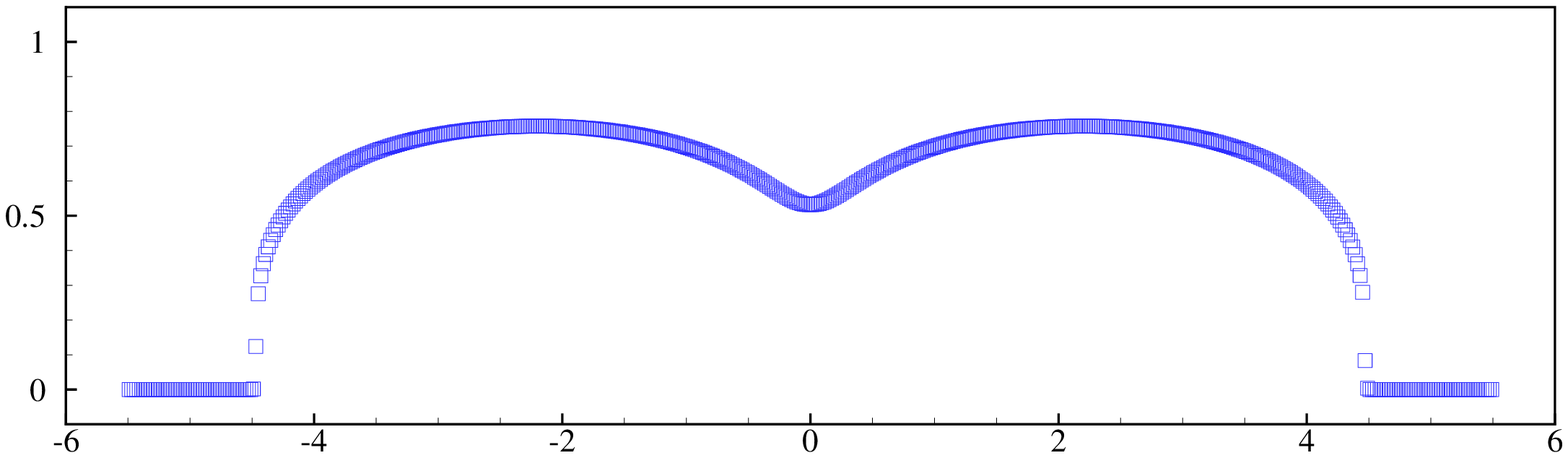}
  \end{minipage}}
\subfigure[t=1.2]{
\begin{minipage}[b]{0.45\textwidth}
   \centering
    \includegraphics[width=3in]{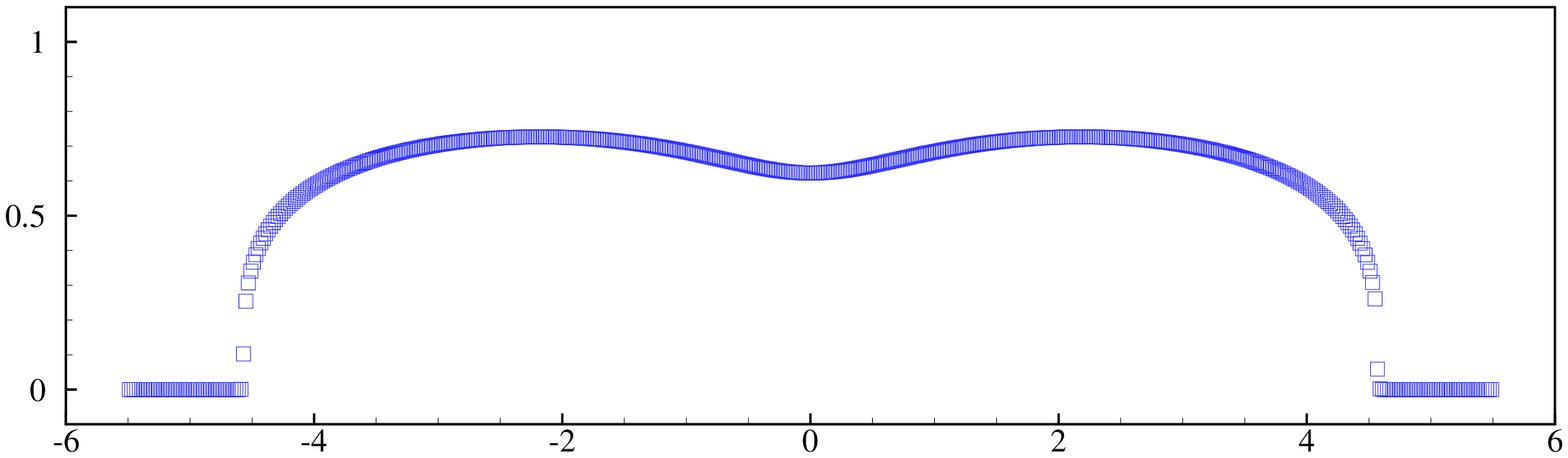}
  \end{minipage}}%
\subfigure[t=1.5]{
\begin{minipage}[b]{0.45\textwidth}
   \centering
    \includegraphics[width=3in]{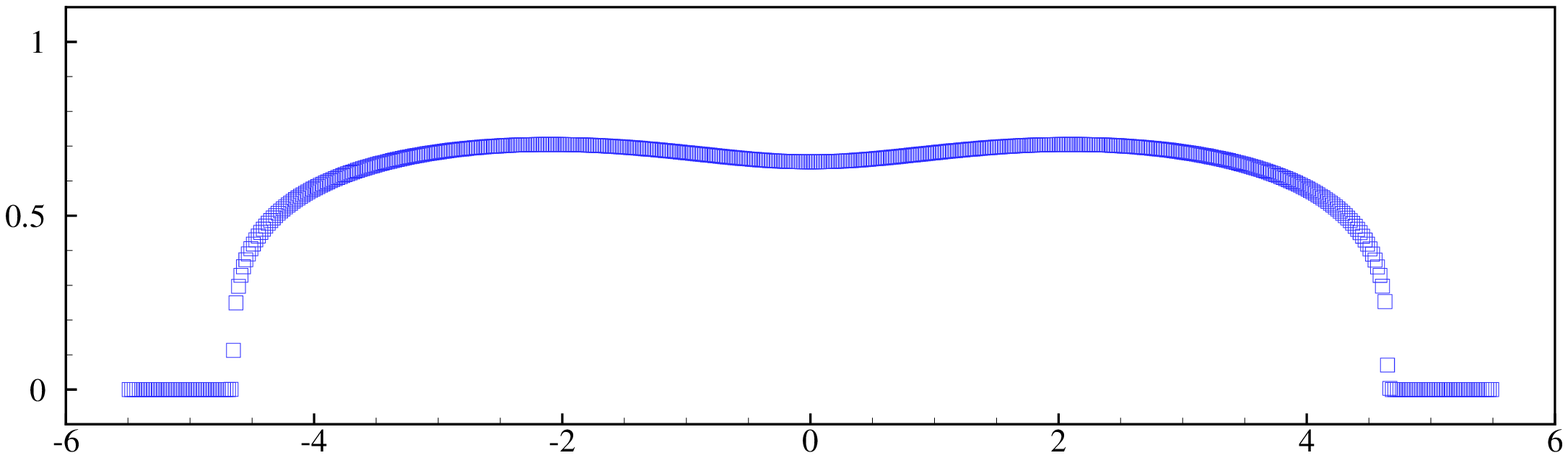}
  \end{minipage}}
\caption{Collision of the two-Box solution with the same height.}
\label{fig2}
\end{figure}

 \begin{figure}[htp]
\centering
\subfigure[t=0.0]{
\begin{minipage}[b]{0.45\textwidth}
   \centering
    \includegraphics[width=3in]{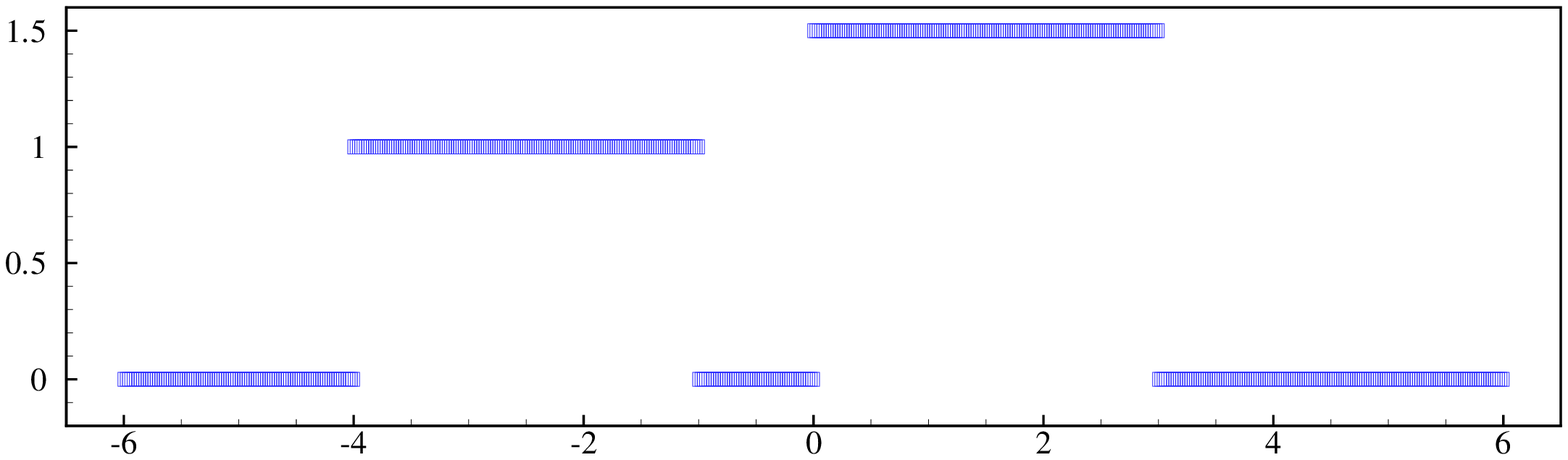}
  \end{minipage}}%
\subfigure[t=0.05]{
\begin{minipage}[b]{0.45\textwidth}
   \centering
    \includegraphics[width=3in]{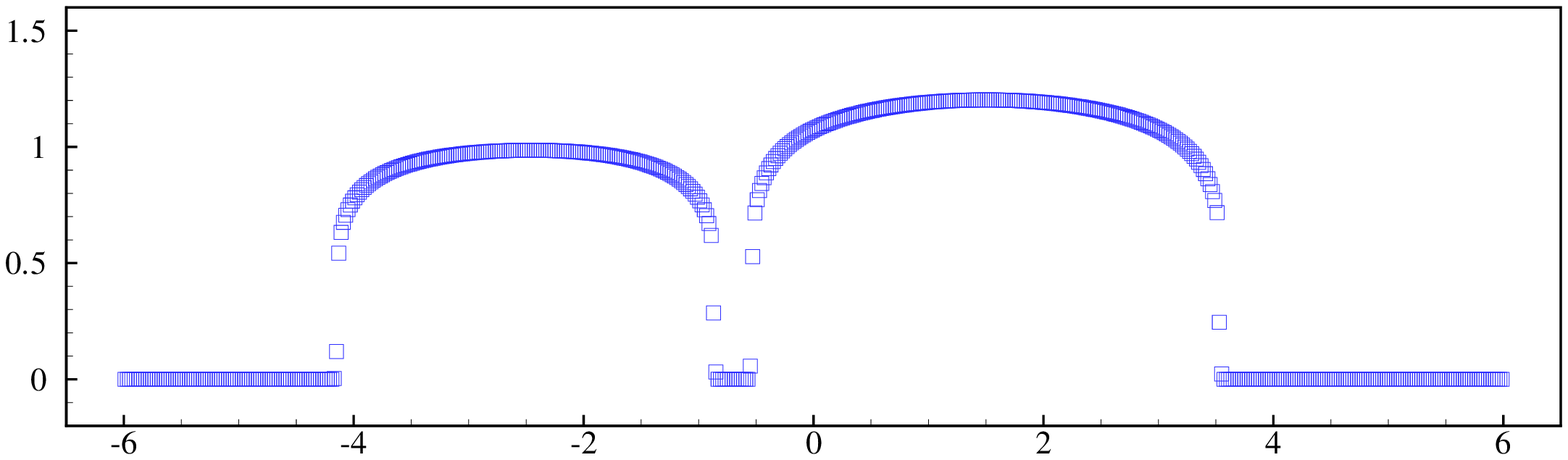}
  \end{minipage}}
\subfigure[t=0.08]{
\begin{minipage}[b]{0.45\textwidth}
   \centering
    \includegraphics[width=3in]{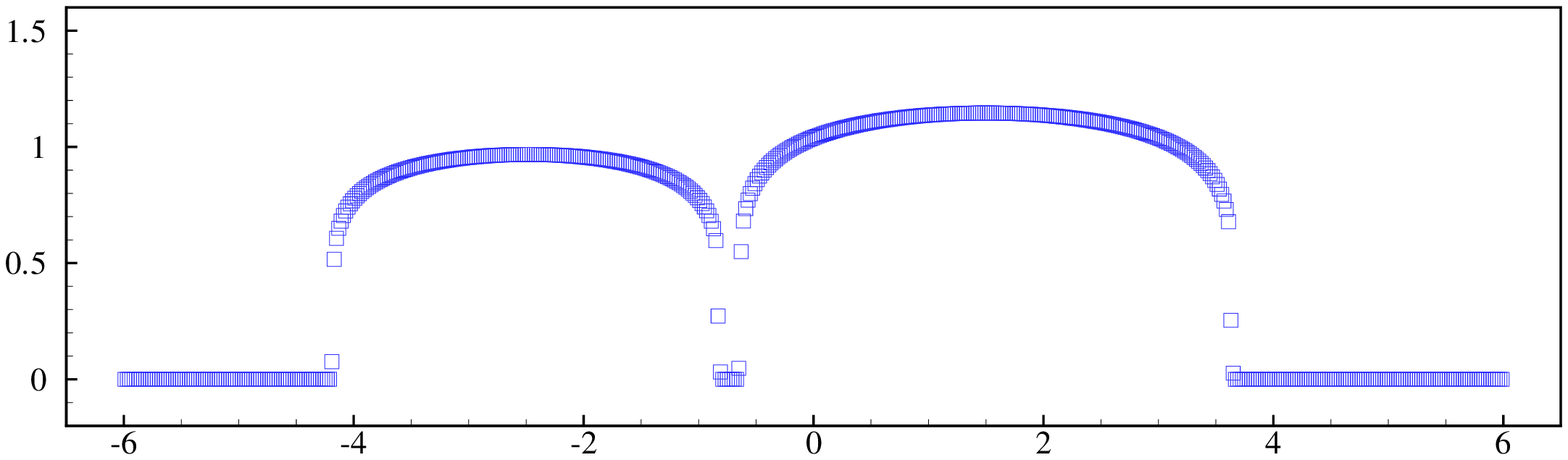}
  \end{minipage}}%
\subfigure[t=0.11]{
\begin{minipage}[b]{0.45\textwidth}
   \centering
    \includegraphics[width=3in]{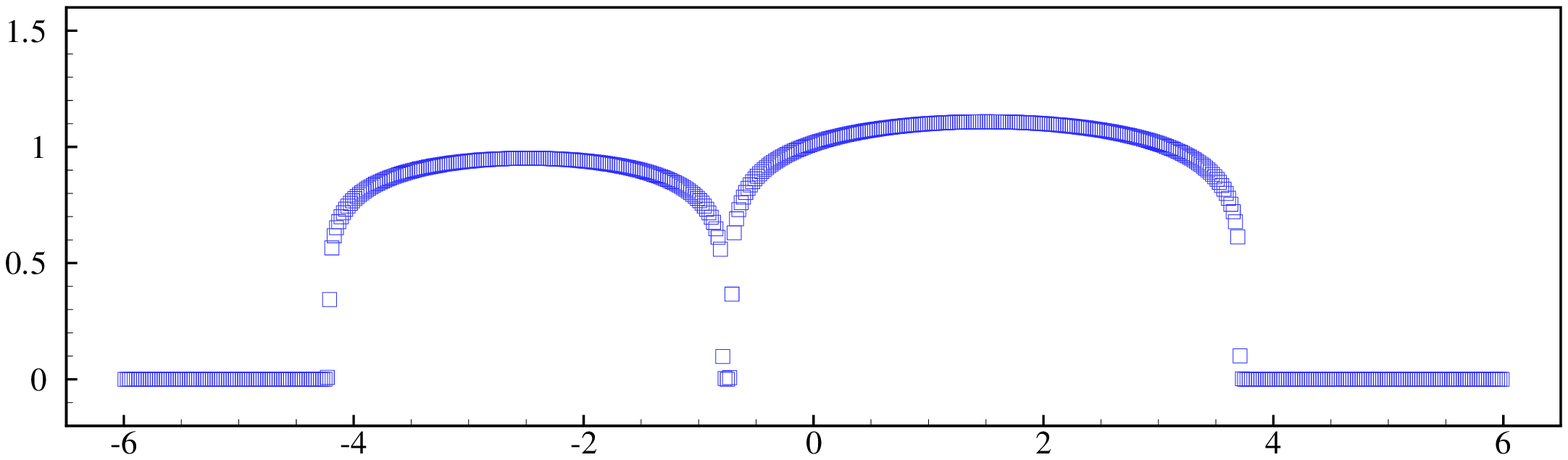}
  \end{minipage}}
\subfigure[t=0.14]{
\begin{minipage}[b]{0.45\textwidth}
   \centering
    \includegraphics[width=3in]{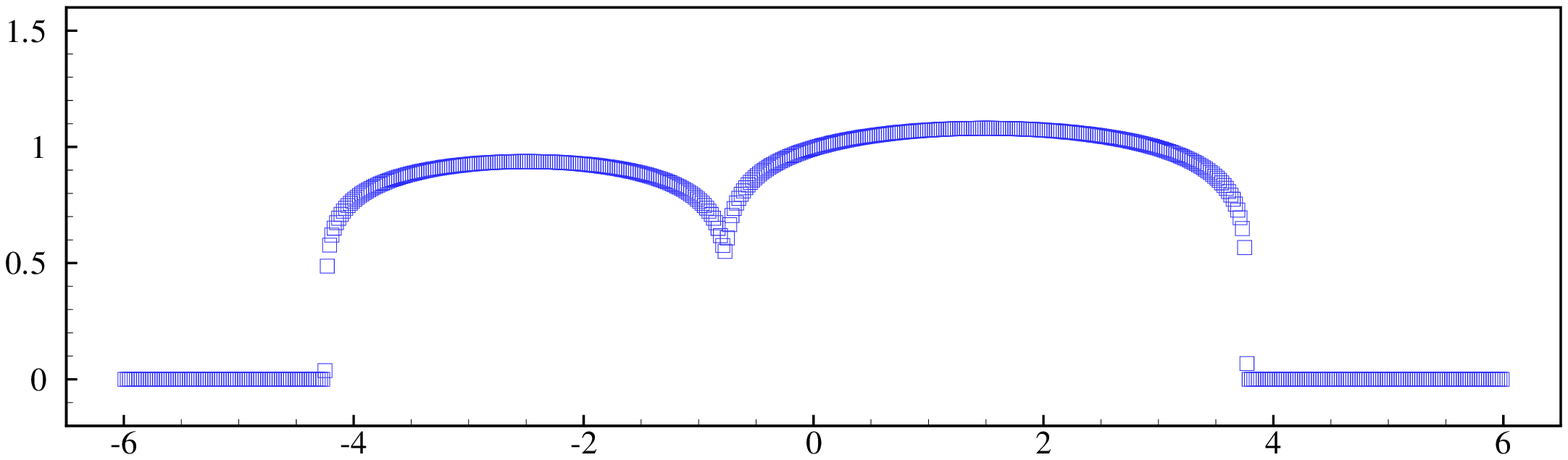}
  \end{minipage}}%
\subfigure[t=0.17]{
\begin{minipage}[b]{0.45\textwidth}
   \centering
    \includegraphics[width=3in]{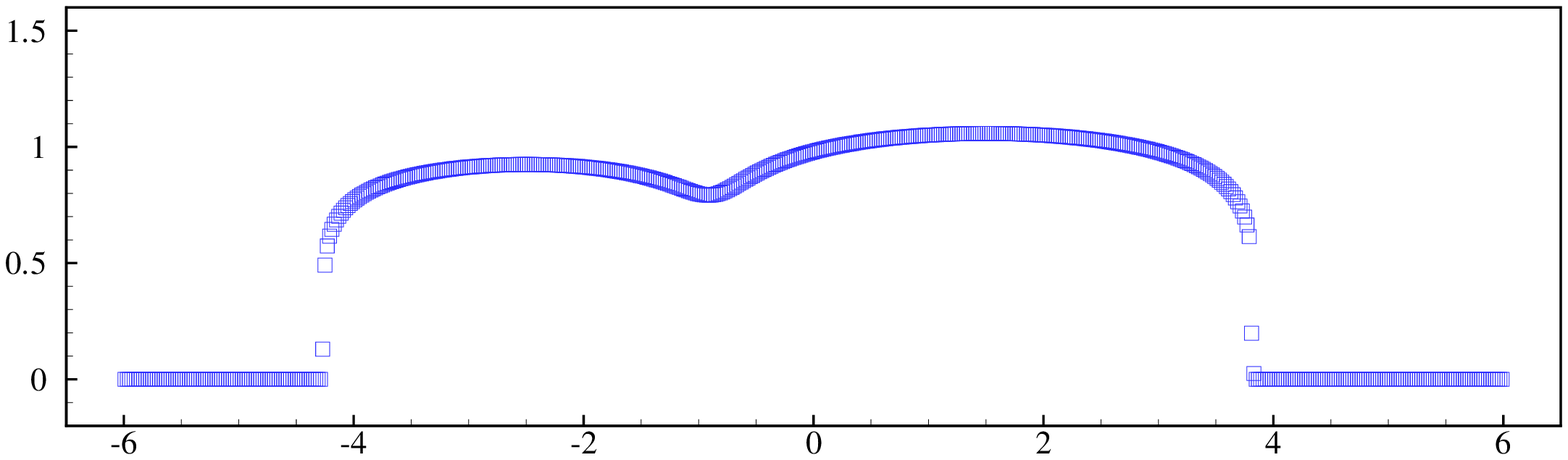}
  \end{minipage}}
\subfigure[t=0.20]{
\begin{minipage}[b]{0.45\textwidth}
   \centering
    \includegraphics[width=3in]{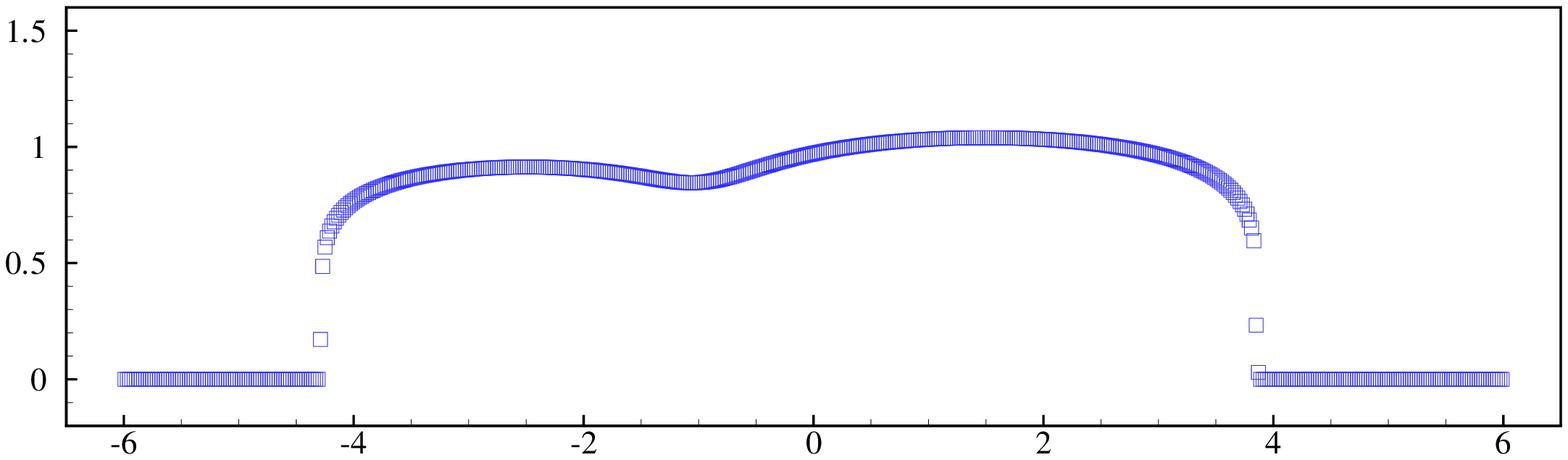}
  \end{minipage}}%
\subfigure[t=0.23]{
\begin{minipage}[b]{0.45\textwidth}
   \centering
    \includegraphics[width=3in]{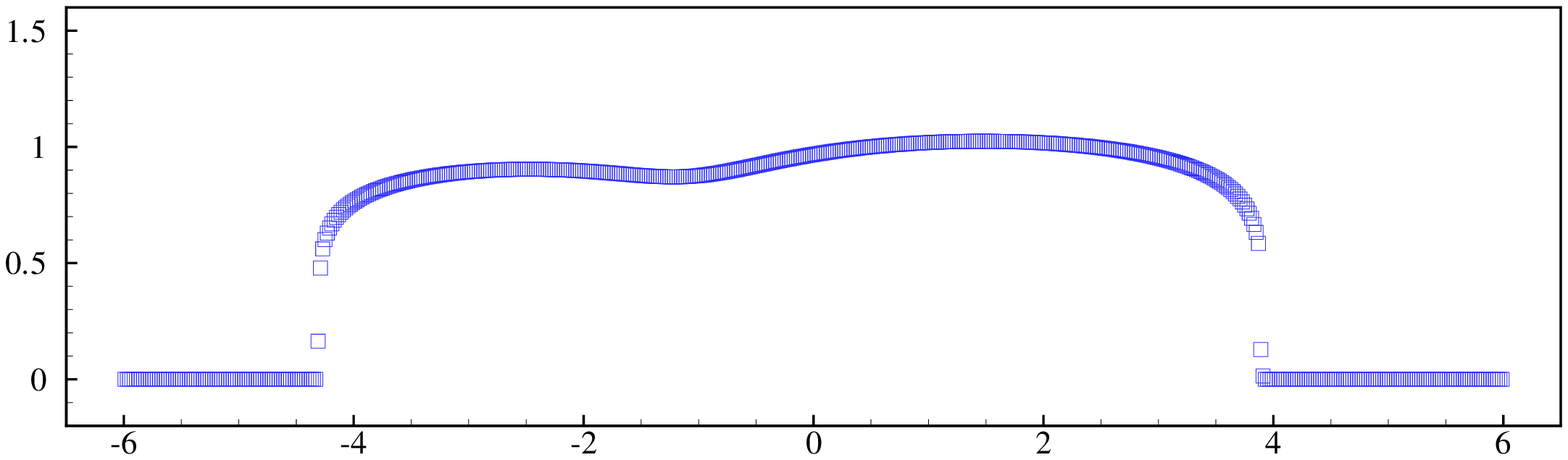}
  \end{minipage}}
\subfigure[t=0.50]{
\begin{minipage}[b]{0.45\textwidth}
   \centering
    \includegraphics[width=3in]{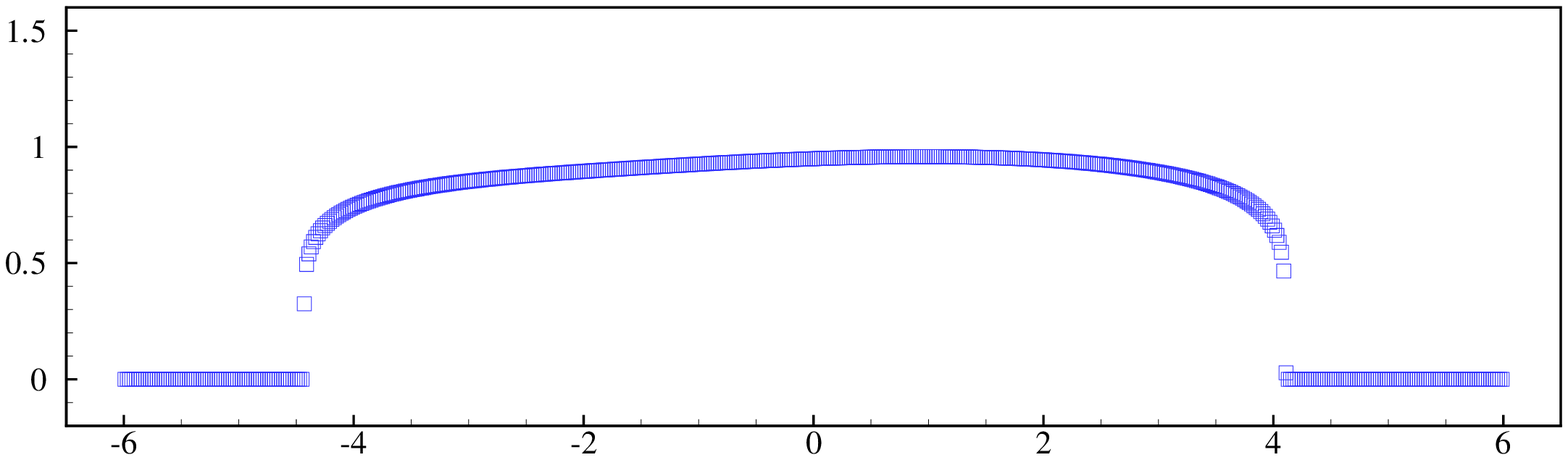}
  \end{minipage}}%
\subfigure[t=1.00]{
\begin{minipage}[b]{0.45\textwidth}
   \centering
    \includegraphics[width=3in]{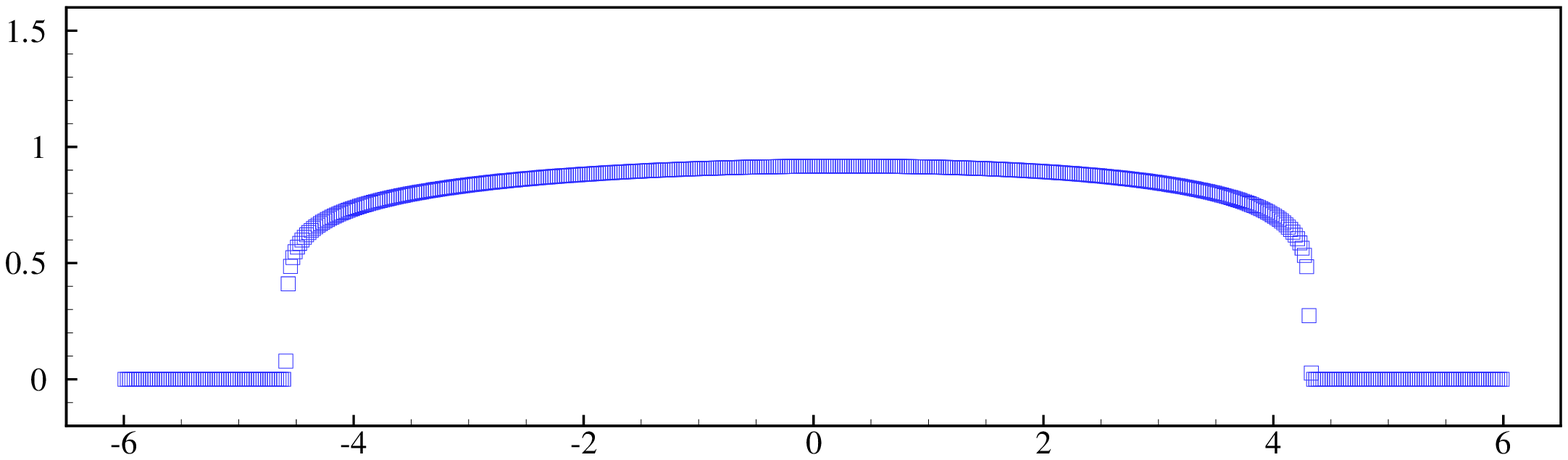}
  \end{minipage}}
\caption{Collision of the two-Box solution with different heights.}
\label{fig3}
\end{figure}

\smallskip
\noindent{\emph{Test 3}}. To test the waiting time phenomenon \cite{Angenent}, i.e,
the interface of the support does not move outward until the waiting time,
  we consider the PME with $m = 8$. The initial condition
is defined as a fast-varying solution, namely,
\begin{equation}
u_0(x) = \begin{cases} \cos x, & \mbox{if} \,\, x \in (-\pi/2,\pi/2)\\
                           0, & \mbox{otherwise}
                           \end{cases}
\end{equation}
with the boundary condition $u(\pm \pi, t)=0$ for all $t>0$.
We plot in Figure \ref{fig4} the evolution of the numerical solutions.
We observe that the interface begins to  move outward around $t=1.4$,
before that, the interface does not move outward, which verifies the
waiting-time phenomenon.

\smallskip
From the above experiments, we see that our scheme can simulate the
PME accurately. The main advantage is the fact that larger time steps can be chosen
compared with the explicit time discretization methods, where
$\dt=\Ocal(h^2)$ is required.

\begin{figure}[htp]
\centering
\subfigure[t=0.0]{
\begin{minipage}[b]{0.45\textwidth}
   \centering
    \includegraphics[width=3in]{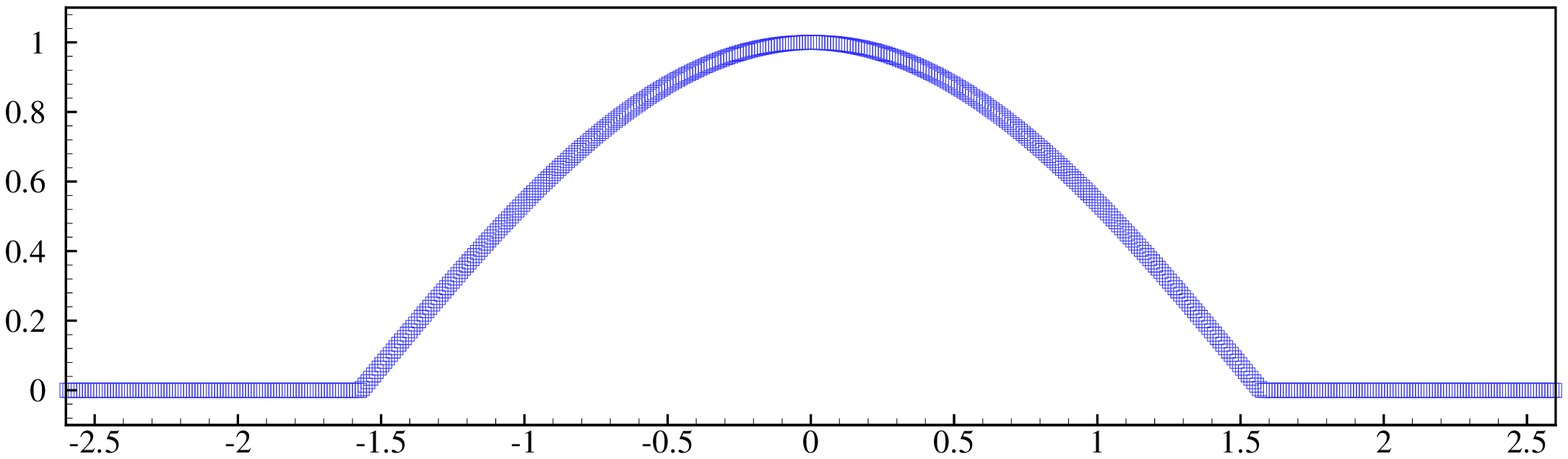}
  \end{minipage}}%
\subfigure[t=0.2]{
\begin{minipage}[b]{0.45\textwidth}
   \centering
    \includegraphics[width=3in]{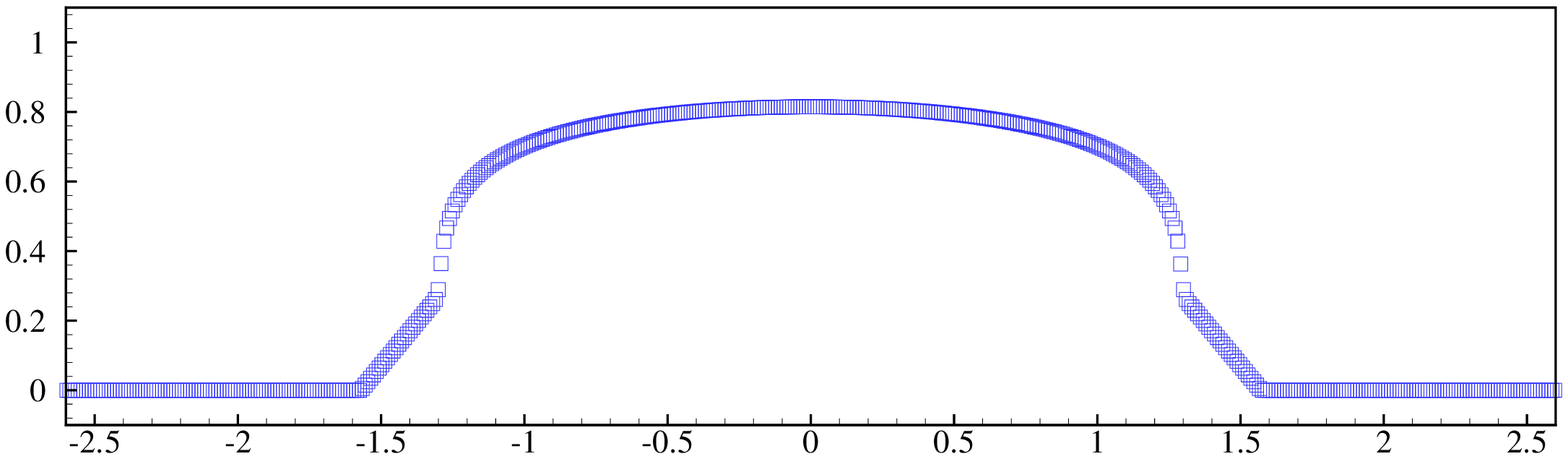}
  \end{minipage}}
\subfigure[t=0.4]{
\begin{minipage}[b]{0.45\textwidth}
   \centering
    \includegraphics[width=3in]{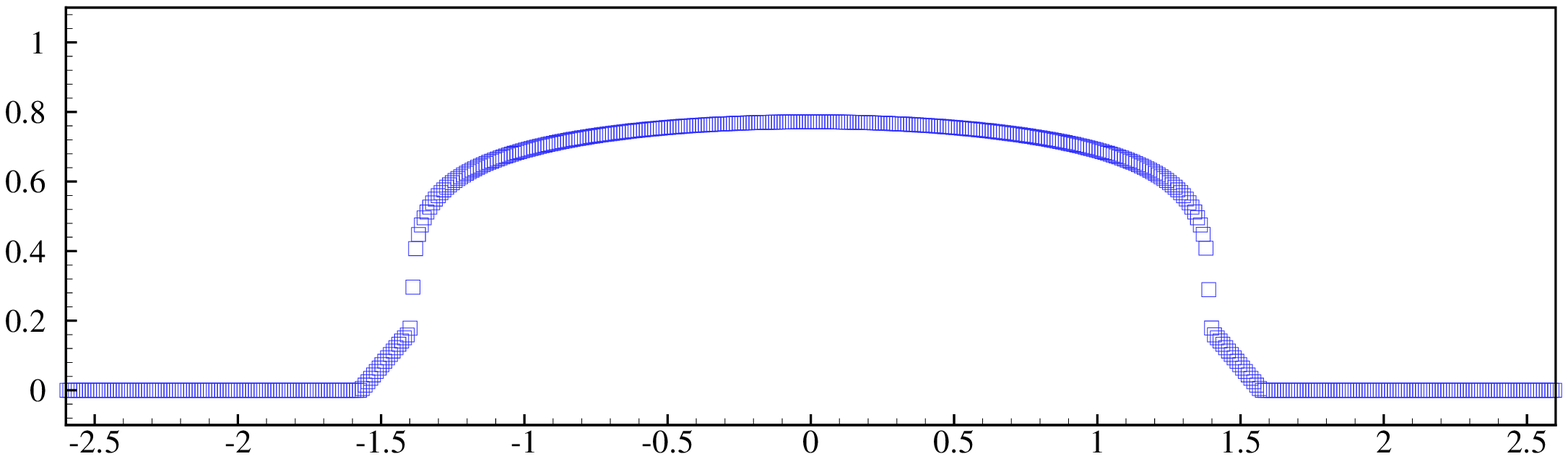}
  \end{minipage}}%
\subfigure[t=0.6]{
\begin{minipage}[b]{0.45\textwidth}
   \centering
    \includegraphics[width=3in]{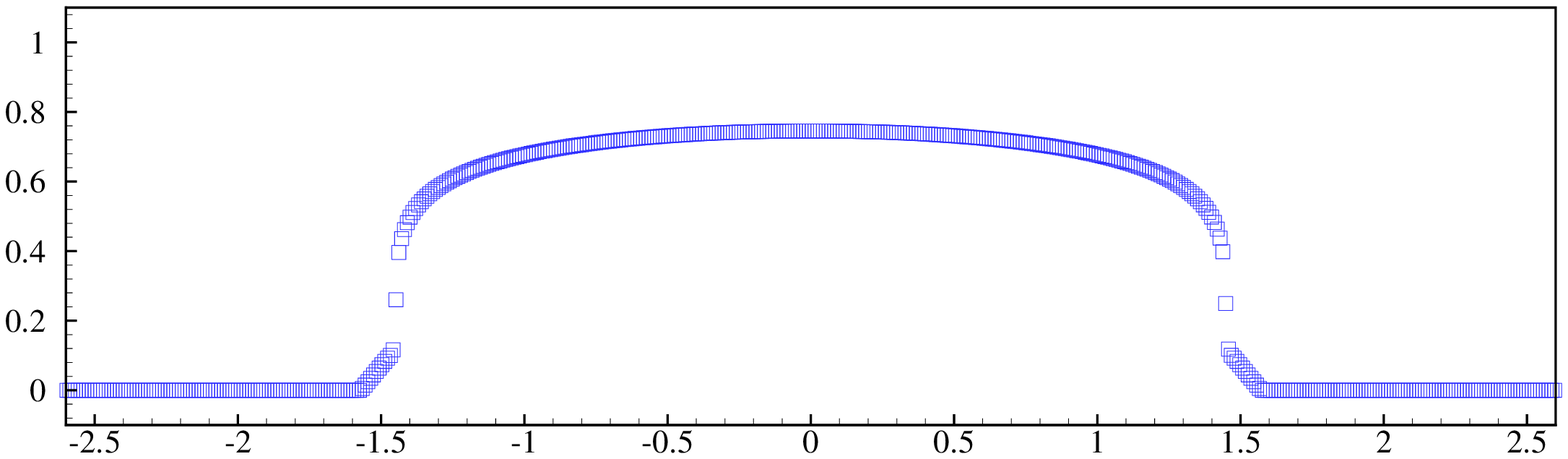}
  \end{minipage}}
\subfigure[t=0.8]{
\begin{minipage}[b]{0.45\textwidth}
   \centering
    \includegraphics[width=3in]{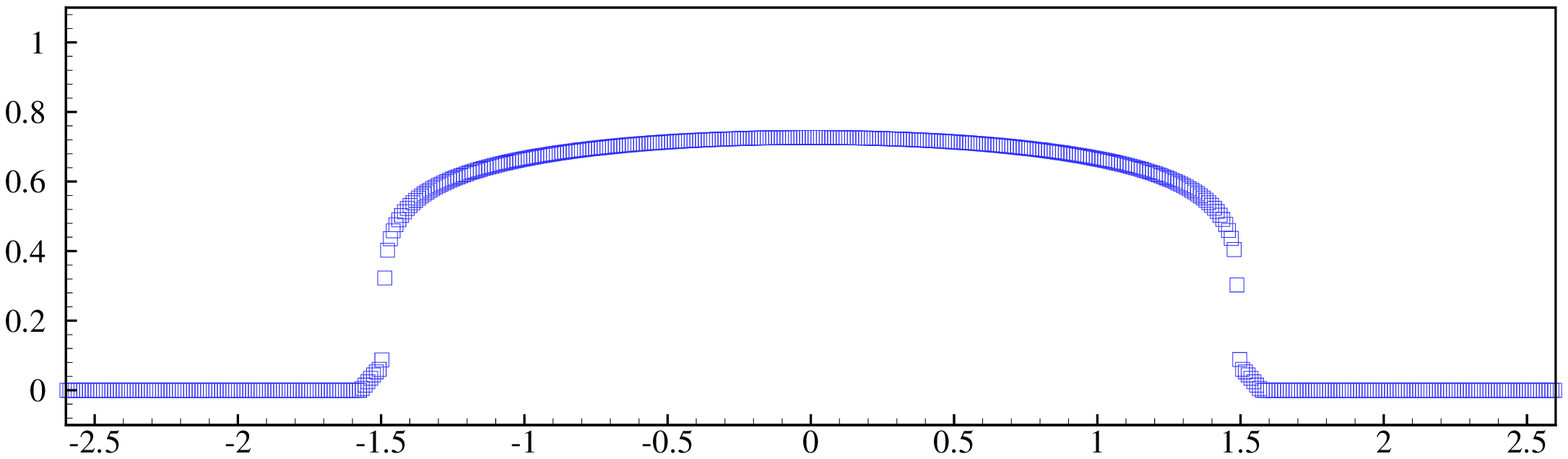}
  \end{minipage}}%
\subfigure[t=1.0]{
\begin{minipage}[b]{0.45\textwidth}
   \centering
    \includegraphics[width=3in]{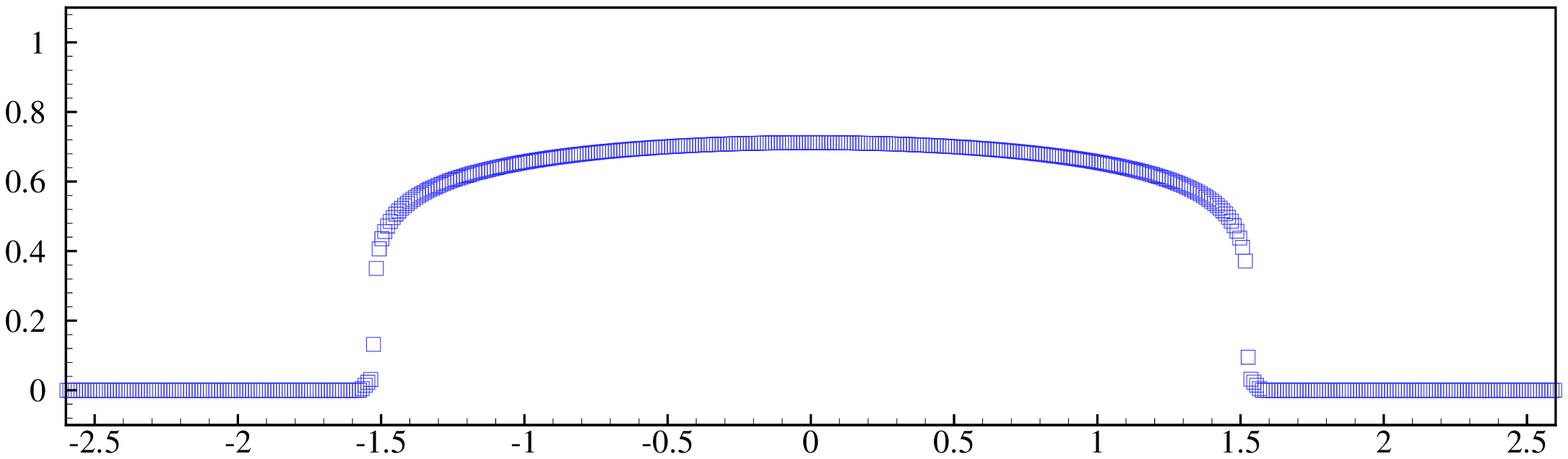}
  \end{minipage}}
\subfigure[t=1.2]{
\begin{minipage}[b]{0.45\textwidth}
   \centering
    \includegraphics[width=3in]{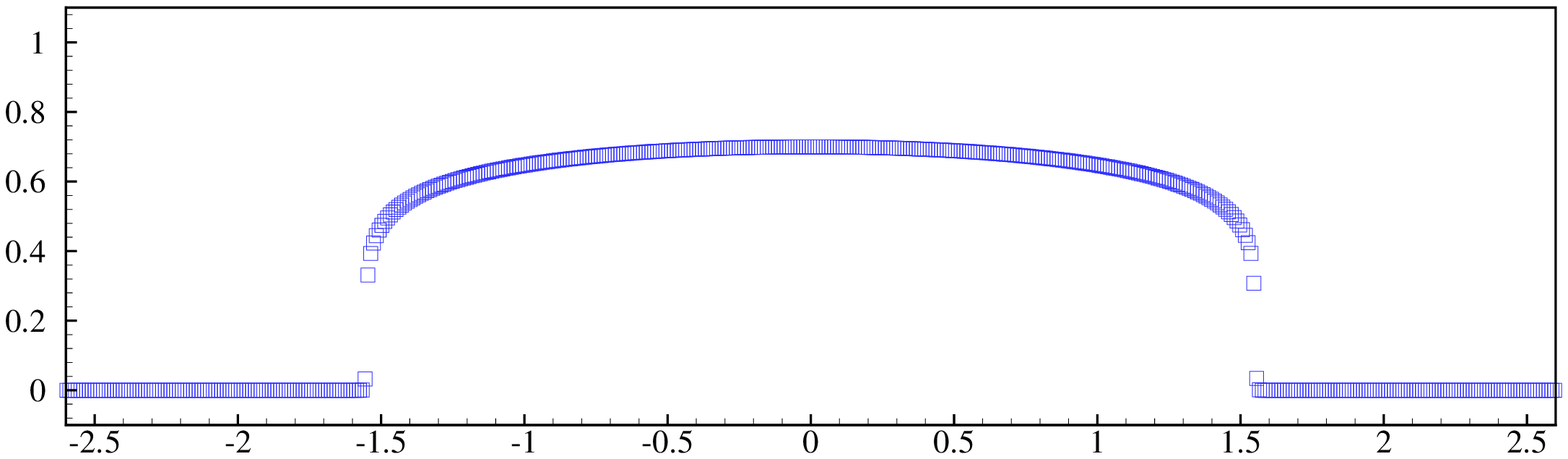}
  \end{minipage}}%
\subfigure[t=1.4]{
\begin{minipage}[b]{0.45\textwidth}
   \centering
    \includegraphics[width=3in]{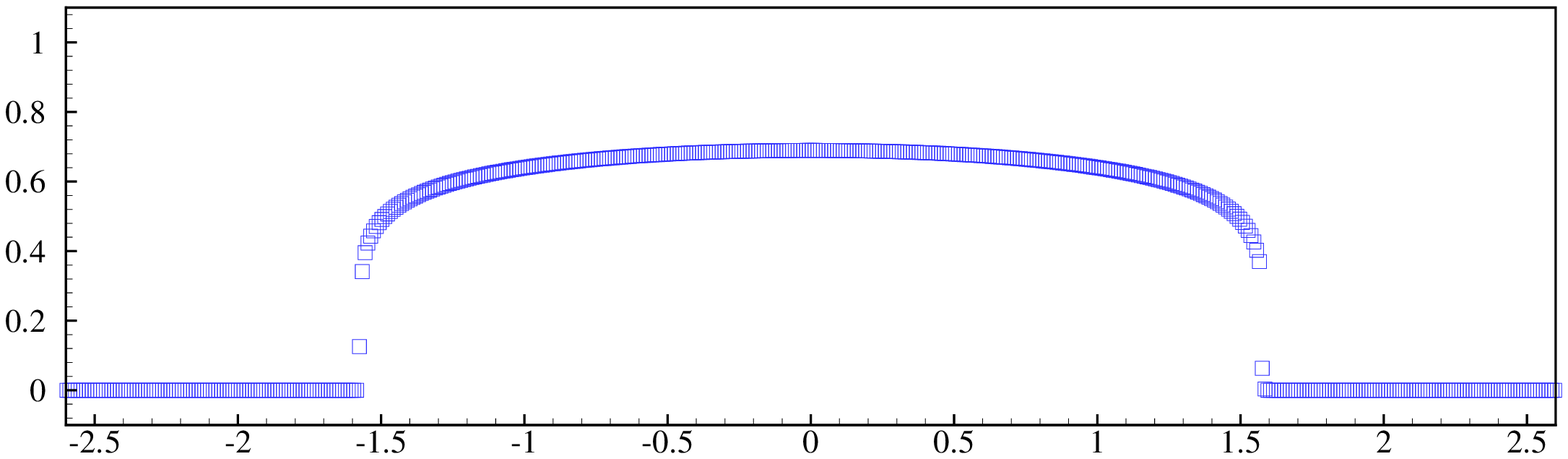}
  \end{minipage}}
\subfigure[t=1.6]{
\begin{minipage}[b]{0.45\textwidth}
   \centering
    \includegraphics[width=3in]{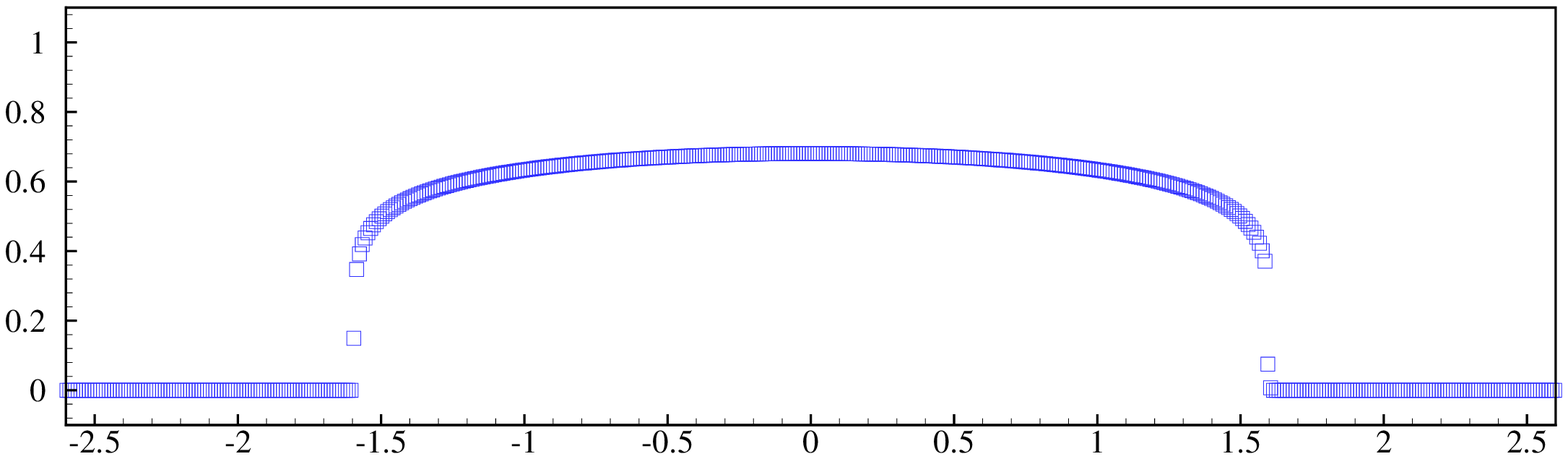}
  \end{minipage}}%
\subfigure[t=1.8]{
\begin{minipage}[b]{0.45\textwidth}
   \centering
    \includegraphics[width=3in]{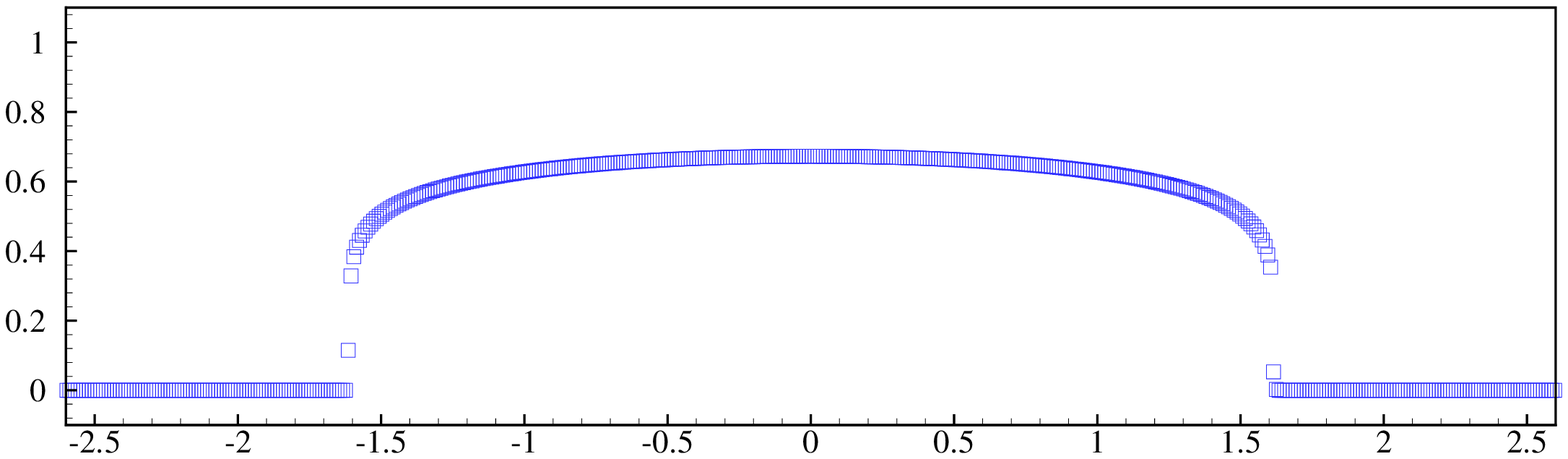}
  \end{minipage}}
\caption{Waiting-time phenomenon.}
\label{fig4}
\end{figure}

\subsection{Numerical simulation to the high-field model}
In this subsection, we apply the proposed scheme to the one-dimensional
high-field (HF) model \cite{HF,HF1} in semiconductor device simulations, which is
a convection-diffusion system coupled with a Poisson potential equation.
The notations for the model are only valid in this subsection.
The HF model is described by the following equation
\begin{equation} \label{eq:hf}
n_t + J_x = 0,  \qquad  x \in (0,0.6)
\end{equation}
where
\[ J = J_{hyp} + J_{vis},
\]
and
\begin{align*}
J_{hyp} =&\, -\mu n E + \gamma \mu \left(\frac{e}{\varepsilon}\right)n(-\mu n E + \omega), \\
J_{vis} =&\, -\gamma [n(\theta+2\mu^2 E^2)]_x + \gamma \mu E (\mu n E)_x.
\end{align*}
In the HF model, the unknown variable $n$ is the electron concentration,
$E=-\phi_x$ is the electric field,  and $\phi$ is the electric potential
which is given by the Poisson equation
\begin{equation}\label{eq:Poisson}
\phi_{xx} = \frac{e}{\varepsilon} (n-n_d),
\end{equation}
with $n_d$ being a given doping (also the initial condition for $n$).
The boundary conditions for $n$ and $E$ are periodic, and for $\phi$ is Dirichlet
boundary condition which will be given later.

In the above model, the parameter $\mu$ is the mobility, $e$ is the electron charge,
$\varepsilon$ is the dielectric permittivity, $\omega=(\mu n E)|_{x=0}$ is taken
to be a constant, $\gamma=\frac{m \mu}{e}$ is the relaxation parameter, with $m$
being the electron effective mass, and $\theta = \frac{k}{m}T_0$, where $k$ is
the Boltzmann constant and $T_0$ is the lattice temperature.

The LDG method has been applied to solve problem (\ref{eq:hf}) in \cite{Liu2}
by Liu and Shu, where they used the third order explicit RK method in
the time discretizaiton. In their later work \cite{Liu3}, an IMEX-LDG method
was adopted to solve the drift-diffusion model, for which the coefficient
of diffusion is constant. In \cite{Liu3}, the IMEX-LDG method shows good
efficiency compared with explicit methods. Here the diffusion of HF model
is nonlinear, we will use the proposed EIN-LDG scheme.

For the convenience of adopting  EIN-LDG scheme, we rewrite the
HF model (\ref{eq:hf}) as
\begin{equation}
n_t + \left( -\mu n E -\gamma \mu^2 \frac{e}{\varepsilon}n^2 E +
\gamma \mu \frac{e}{\varepsilon} \omega n - 3\gamma \mu E n (\mu E)_x \right)_x
-[(\gamma \theta+ \gamma \mu^2 E^2)n_x]_x=0.
\end{equation}
Using $E_x=-\frac{e}{\varepsilon}(n-n_d)$, we can write the equation as
\begin{equation} \label{13}
n_t + f(n,E)_x - (a(E)n_x)_x=0,
\end{equation}
where
\begin{align*}
f(n,E) =&\, \gamma \mu^2 \frac{e}{\varepsilon} nE (2n-3n_d)-\mu n E (1+3\gamma E \mu_x)
+ \gamma \mu \frac{e}{\varepsilon} \omega n ,\\
a(E) =&\, \gamma \theta + \gamma \mu^2 E^2.
\end{align*}
Then by adding  a term $a_0 n_{xx}$ on both sides of (\ref{13}) we get
\begin{equation} \label{14}
n_t + \underbrace{f(n,E)_x - [(a(E)-a_0)n_x]_x}_{explicit} = \underbrace{a_0 n_{xx}}_{implicit},
\end{equation}
with periodic boundary conditions for $n$ and $E$,
where $a_0$ is a properly chosen positive constant.
We solve (\ref{14}) by the standard LDG scheme with
the third order IMEX scheme (\ref{imex:3}), where piecewise
quadratic polynomials space is adopted in spatial discretization,
Lax-Friedriches numerical flux and alternating numerical flux are
used for the convection and diffusion parts, respectively.
We treat the part on the left hand side explicitly, and the
part on the right hand side implicitly.

We point out that the potential equation (\ref{eq:Poisson}) is also
solved by the LDG method, i.e, finding $(\phi_h,\psi_h)\in V_h \times V_h$, such that
for any $(v,r)\in V_h \times V_h$, there holds
\begin{subequations} \label{15}
\begin{align}
(\frac{e}{\varepsilon}(n-n_d),v)_j=&\,-(\psi_h,v_x)_j + \widehat{\psi_h}_{\jfhalf} v_{\jfhalf}^- -
\widehat{\psi_h}_{\jbhalf} v_{\jbhalf}^+ , \\
 (\psi_h,r)_j =&\, -(\phi_h,r_x)_j + \widehat{\phi_h}_{\jfhalf} r_{\jfhalf}^- -
\widehat{\phi_h}_{\jbhalf} r_{\jbhalf}^+ ,
\end{align}
\end{subequations}
for $j=1,2,\cdots,N$, where we take the minimal dissipation numerical flux as in \cite{Cock:Dong},
specifically
\begin{equation*}
\widehat{\phi_h}_{\jfhalf}=\begin{cases} \phi_a, &  j=0,\\
                                         (\phi_h)_{\jfhalf}^-, & j=1,2,\cdots,N-1,\\
                                         \phi_b, & j=N.
                            \end{cases}
\end{equation*}
\begin{equation*}
\widehat{\psi_h}_{\jfhalf}=\begin{cases} (\psi_h)_{\jfhalf}^+, & j=0,1,\cdots,N-1,\\
                                         (\psi_h)_{N+\frac12}^- - \frac{1}{h}
                                         \left((\phi_h)_{N+\frac12}^- - \phi_b\right), & j=N.
                            \end{cases}
\end{equation*}
Here $\phi_a$ and $\phi_b$ are the given Dirichlet boundary conditions, $h$ is the mesh size.
The numerical approximation of electric field is given by $E_h=-\psi_h$.

\medskip

Next we simulate the HF model with the same parameters as in \cite{Liu2}.
The doping $n_d$ is  a piecewise-defined function in $[0.0.6]$,
$n_d=5\times 10^{17} \rm{cm}^{-3}$ in $[0,0.1]$ and $[0.5,0.6]$,
$n_d=2 \times 10^{15} \rm{cm}^{-3}$ in $[0.15,0.45]$,
and a smooth transition in between. The lattice temperature
is $T_0=300 \rm{K}$. The constants $k=0.138\times 10^{-4}$, $\varepsilon=11.7\times 8.85418$,
$e=0.1602$, $m=0.26\times 0.9109 \times 10^{-31} \rm{kg}$, and the mobility $\mu=0.0088\left(1+
\frac{14.2273}{1+\frac{n_d}{143200}}\right)$ in our units. The boundary conditions are given as follows:
$\phi_a=\frac{kT}{e}\ln(\frac{n_d}{n_i})$ at the left boundary, with $n_i=1.4\times 10^{10}\rm{cm}^{-3}$,
$\phi_b=\phi_a + v_{bias}$ with the voltage drop $v_{bias}=1.5$ at the right boundary for the potential;
$T=300 \rm{K}$ at both boundaries for the temperature; and $n=5\times 10^{17} \rm{cm}^{-3}$ at both
boundaries for the concentration.

In the simulations, we let $a_0=\max\{a(E_h)\}$ in (\ref{14}) and adjust it after every $100$ steps,
here $E_h=-\psi_h$ is solved from (\ref{15}). The code runs until the numerical solution converges to
the steady state, we use $\norm{n_h^{nt}-n_h^{nt-1}}_{L^1}< 10^{-6}$ as the criterion for stopping
computation, where $n_h$ is the numerical solution of the electron concentration $n$, and $nt$
is the number of time steps. \textcolor[rgb]{0,0,1}{The positivity limiter \cite{Zhang:pme} 
is not necessary for this example, since the minimum value of $n_h$ will not be below 0 due to
the initial setting of $n_d$ defined above.}

Table \ref{table5} and Table \ref{table6} show the time step, the number of time steps,
the numerical steady time, and the
CPU time to reach the steady state for the third order explicit RK LDG (EX-RK-LDG)
and the third order EIN-LDG methods
when we use $100$ mesh cells and $200$ mesh cells in $[0,0,6]$, respectively.
From these tables, we see that the proposed EIN-LDG scheme can take
much larger time steps compared with the explicit method, and hence it saves
in CPU time significantly.
{On the other hand, due to the larger time step,
the numerical steady time for EIN-LDG scheme is greater than that for
the EX-RK-LDG scheme.}
Figure \ref{fig5} plots the simulation results of the HF model with
$200$ mesh cells, for both the EX-RK-LDG method and the EIN-LDG method.
It shows that the EIN-LDG method gives the same convergent results as
the explicit method.  The EIN-LDG scheme is thus a reliable and efficient
tool for the study of  models such as the HF model
to describe the correct physics.

\begin{table}[htp]
\caption{The time step $\dt$, the number of time steps $nt$,
the numerical steady time $t$,
and the CPU time to reach the steady state for third order EX-RK-LDG and
third order EIN-LDG methods with $100$ mesh cells in $[0,0.6]$.}
\begin{center}\footnotesize
\renewcommand{\arraystretch}{1.3}
\begin{tabular}{|c|c|c|c|c|c|c|}
\hline
& 3rd order EX-RK-LDG  &\multicolumn{5}{c|}{3rd order EIN-LDG}
\\
\hline
$\dt$  &4.604E-6 & 1.2E-4     &1.8E-4 & 2.4E-4   &3.0E-4 & 3.6E-4    \\
\hline
$nt$  &265231 &   13517 & 9253   &7069 & 5735 & 4834\\
\hline
$t$  &1.272 & 1.622& 1.666 & 1.697  &1.720  & 1.740\\
\hline
CPU time  &506 & 59.51& 41.39 & 32.25  &27.27 & 22.99\\
\hline
\end{tabular}
\end{center}
\label{table5}
\end{table}

\begin{table}[htp]
\caption{The time step $\dt$, the number of time steps $nt$,
the numerical steady time $t$,
and the CPU time to reach the steady state for third order EX-RK-LDG and
third order EIN-LDG methods with $200$ mesh cells in $[0,0.6]$.}
\begin{center}\footnotesize
\renewcommand{\arraystretch}{1.3}
\begin{tabular}{|c|c|c|c|c|c|c|}
\hline
& 3rd order EX-RK-LDG  &\multicolumn{5}{c|}{3rd order EIN-LDG}
\\
\hline
$\dt$  &1.151E-6 & 1.2E-4     &1.8E-4 & 2.4E-4   &3.0E-4 & 3.6E-4    \\
\hline
$nt$  &930776 &   13508 & 9248   &7065 &5732  & 4831\\
\hline
$t$  &1.122 & 1.621& 1.665 & 1.696  &1.720  & 1.739\\
\hline
CPU time  &5434.47 & 205.41& 140.76  & 108.22  & 85.27& 72.04\\
\hline
\end{tabular}
\end{center}
\label{table6}
\end{table}

\begin{figure}[ht]
\centering
\subfigure[electron concentration $n$ ($10^{12}\rm{cm}^{-3}$)]
{\begin{minipage}[b]{0.45\textwidth}
   \centering
    \includegraphics[width=3in]{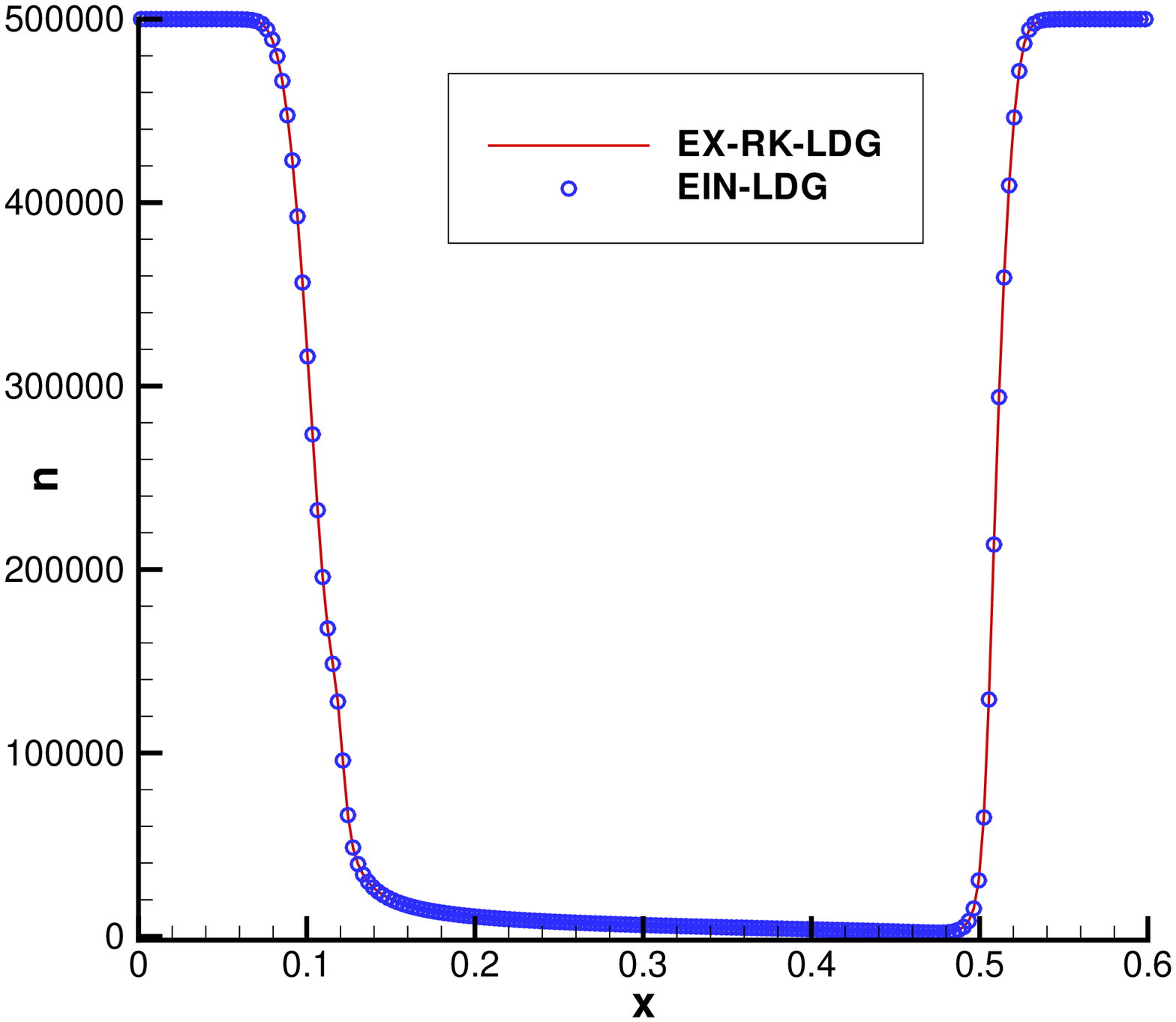}
  \end{minipage}}%
\subfigure[electric field $E$ (V/$\mu$m)]{
\begin{minipage}[b]{0.45\textwidth}
   \centering
    \includegraphics[width=3in]{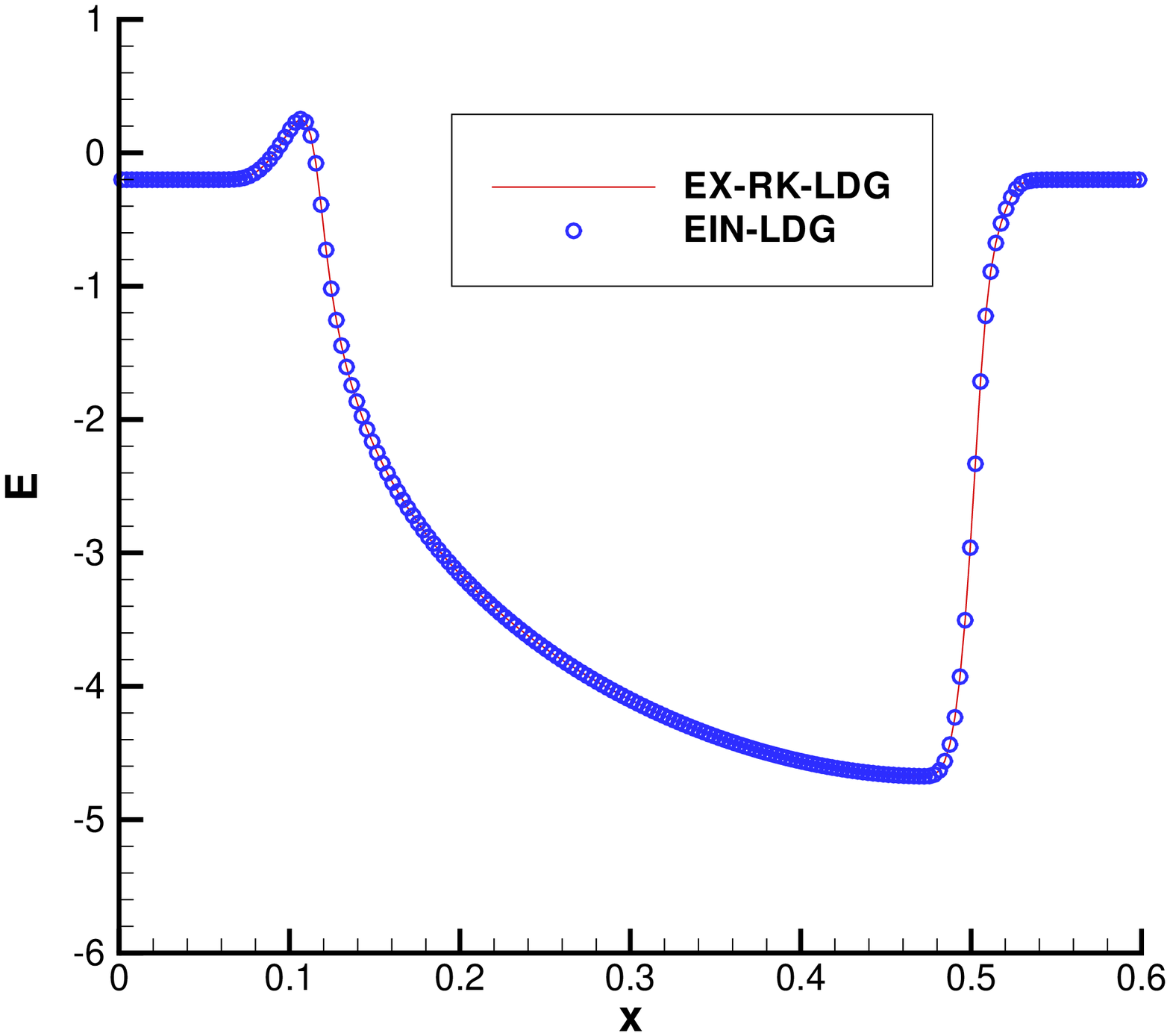}
  \end{minipage}}
\caption{The simulation results of HF model in $[0,0.6]$
with 200 mesh cells, for third order EX-RK-LDG and third order EIN-LDG methods,
$\dt$=3.6E-4 in EIN-LDG method.}
\label{fig5}
\end{figure}

\section{Conclusion}
\label{sec6}
We have developed a class of EIN-LDG schemes for solving one-dimensional
nonlinear diffusion problems,
where a constant diffusion term is added and subtracted to the original equation,
and then one of the terms is treated implicitly and the remaining
terms are treated
explicitly. We have presented the stability and error analysis of the first
and second order EIN-LDG schemes for a simplified  model,
and based on the stability result we have
provided a guidance for the choice of $a_0$
to ensure the unconditional stability of the schemes.
Numerical experiments show that the proposed first and second order schemes are stable and
can achieve optimal orders of accuracy when $a_0 \ge \max\{a(u)\}/2$.
A third order time discretization is also considered numerically.
The schemes have good performance and high efficiency for the PME
and the high-field model in semiconductor device simulations.
The application of the EIN-LDG schemes to solve
two and higher spatial dimensional problems is straightforward,
for which the proposed schemes will be more
efficient compared with explicit or standard
implicit schemes.  This will be left for our future work.

\end{document}